\newtheorem{theorem}{Theorem}[section]
\newtheorem{lemma}[theorem]{Lemma}
\newtheorem{proposition}[theorem]{Proposition}
\newtheorem{corollary}[theorem]{Corollary}
\newcommand{\lif}{\operatorname{lif}}
\newcommand{\oper}{{\boldsymbol\Omega}}
\newcommand{\delop}{{\boldsymbol\Delta}}
\newcommand{\ssA}{\textsf{A}}
\newcommand{\ssB}{\textsf{B}}
\newcommand{\ssC}{\textsf{C}}
\newcommand{\ssD}{\textsf{D}}
\title[Pattern avoidance in forests of binary shrubs]{Pattern avoidance in forests of binary shrubs}
\author{David~Bevan\affiliationmark{1} \and
Derek~Levin\affiliationmark{2}\thanks{Student Blugold Commitment Differential Tuition funds through the University of Wisconsin-Eau Claire Summer Research Experiences for Undergraduates } \and
Peter~Nugent\affiliationmark{2}\thanks{Student Blugold Commitment Differential Tuition funds through the University of Wisconsin-Eau Claire Summer Research Experiences for Undergraduates } \and
Jay~Pantone \affiliationmark{3}\and
Lara~Pudwell \affiliationmark{4}\and
Manda~Riehl\affiliationmark{2}\thanks{University of Wisconsin - Eau Claire Office of Research and Sponsored Programs} \and
ML~Tlachac\affiliationmark{2}\thanks{Student Blugold Commitment Differential Tuition funds through the University of Wisconsin-Eau Claire Summer Research Experiences for Undergraduates }}
\affiliation{
Dept. of Mathematics and Statistics, Open Univ., UK\\
Dept. of Mathematics, Univ. of Wisconsin -- Eau Claire,  USA\\
Dept. of Mathematics, Dartmouth College, USA\\
Dept. of Mathematics and Statistics, Valparaiso Univ., USA}
\keywords{permutation patterns, linear extensions}
\begin{document}

\publicationdetails{18}{2016}{2}{8}{1322}
\maketitle

\begin{abstract}
%\addtext{
We investigate pattern avoidance in permutations satisfying some additional restrictions. These are naturally considered in terms of avoiding patterns in linear extensions of certain forest-like partially ordered sets, which we call binary shrub forests.
In this context, we enumerate forests avoiding patterns of length three.
In four of the five non-equivalent cases, we present explicit enumerations by exhibiting bijections with certain lattice paths bounded above by the line $y=\ell x$, for some $\ell\in\mathbb{Q}^+$, one of these being the celebrated Duchon's club paths with $\ell=2/3$.
In the remaining case, we use the machinery of analytic combinatorics to determine the minimal polynomial of its generating function, and deduce its growth rate.
%} %\addtext
\end{abstract}

\section{Introduction}

In this paper, we extend pattern avoidance to a previously unexamined combinatorial structure.  Let $\mathcal{S}_n$ be the set of permutations of length $n$.  First, given permutations $\pi=\pi_1\pi_2 \cdots \pi_n \in \mathcal{S}_n$ and $\rho = \rho_1 \rho_2 \cdots \rho_m \in \mathcal{S}_m$ we say that $\pi$ \emph{contains} $\rho$ as a (classical) pattern if there exist $1 \leq i_1 < i_2 < \cdots < i_m \leq n$ such that $\pi_{i_a} < \pi_{i_b}$ if and only if $\rho_a < \rho_b$.  In this case we say that $\pi_{i_1}\pi_{i_2}\cdots \pi_{i_m}$ is \emph{order-isomorphic} to $\rho$ (denoted $\pi_{i_1}\pi_{i_2}\cdots \pi_{i_m} \sim \rho$) and that $\pi_{i_1}\pi_{i_2}\cdots \pi_{i_m}$ \emph{reduces to} $\rho$.  If $\pi$ does not contain $\rho$, then $\pi$ is said to \emph{avoid} $\rho$. This definition of pattern avoidance in permutations appears in many differing applications ranging from the analysis of sorting algorithms to algebraic geometry, and has generated a number of enumeration and classification questions that are of interest in their own right.

Motivated by work with trees~\cite{VERUM10, VERUM11, Rowland10} and comb posets~\cite{Combs14}, Levin, Pudwell, Riehl and Sandberg~\cite{ourpaper} considered pattern avoidance in heaps.  In particular a \emph{complete $k$-ary tree} is a tree where each node has $k$ or fewer children, all levels except possibly the last are completely \emph{full} (i.e. level $i$ contains $k^{i-1}$ vertices), and the last level has all of its nodes to the left side (i.e. for any two vertices in the penultimate level, if the right vertex has a positive outdegree, then the outdegree of the left vertex is $k$, and no more than one vertex in the penultimate level has outdegree not equal to $0$ or~$k$).

A \emph{$k$-ary heap} is a complete $k$-ary tree labeled with $\{1,\dots,n\}$ such that every child has a larger label than its parent.  We draw trees (respectively heaps) with the root at the bottom of the figure.   An example of a 2-ary (i.e. binary) heap on 9 vertices is shown in Figure~\ref{sampleheap}.  Let $\mathcal{H}^k_n$ denote the set of $k$-ary $n$-vertex heaps.  The heap in Figure~\ref{sampleheap} is a member of $\mathcal{H}^2_{9}$.  Given a heap $H$, we associate a permutation $\pi_H$ with it by recording the vertex labels as they are encountered in a breadth-first search.  For example, if $H$ is the heap in Figure~\ref{sampleheap}, then $\pi_H = 125349867$.  We say that heap $H$ contains (respectively avoids) $\rho$ as a pattern if $\pi_H$ contains (respectively avoids) $\rho$ as a classical pattern, using the definition above. Let $\mathcal{H}^k_n(P)$ be the set of members of $\mathcal{H}^k_n$ that avoid all patterns in the list $P$.  While the heap in Figure~\ref{sampleheap} contains 123, 132, 213, 312, and 321, it is a member of $\mathcal{H}^2_{9}(231)$.  In~\cite{ourpaper}, the authors determined $\left|\mathcal{H}^k_n(\rho)\right|$ for $\rho \in \left(\mathcal{S}_3 \setminus \{321\}\right)$, and $\left|\mathcal{H}^k_n(P)\right|$ when $\left|P\right| \geq 2$ and $P \subseteq \mathcal{S}_3$.

\begin{figure}
\begin{center}
$H =$\raisebox{-0.5\height}{
\begin{tikzpicture}[
  level distance=10mm,
  every node/.style={circle,inner sep=.35pt,fill=black},
  level 1/.style={sibling distance=15mm},
  level 2/.style={sibling distance=10mm},
  level 3/.style={sibling distance=5mm}
]
\node[label=below:1] {\phantom{x}} [grow=up]
  child {node[label=right:5] {\phantom{x}}
	  child {node[label=right:8] {\phantom{x}}}
    child {node[label=right:9] {\phantom{x}}}
  }
	child {node[label=left:2] {\phantom{x}}
    child {node[label=left:4] {\phantom{x}}}
    child {node[label=left:3] {\phantom{x}}
		 	  child {node[label=above:7] {\phantom{x}}}
        child {node[label=above:6] {\phantom{x}}}
		}
  };
\end{tikzpicture}}
\end{center}
\caption{A binary heap on 9 vertices}
\label{sampleheap}
\end{figure}
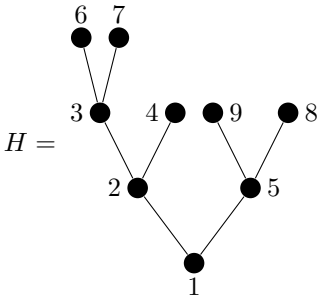

In this paper we extend their pattern avoidance in a new direction by considering forests of heaps.    A \emph{heap forest} is an ordered collection of heaps.  Given a forest $F$ of heaps $H_1,H_2, \dots, H_n$, we label all vertices in $F$ with distinct integers from $\{1,\dots,\left|F\right|\}$ (where $|F| = |H_1| + \cdots + |H_n|$), and then associate the permutation $\pi_{F} = \pi_{H_1}\pi_{H_2}\cdots \pi_{H_n}$.  In other words, we concatenate the associated permutations for each heap to obtain the permutation associated to the forest.  Given the forest in Figure~\ref{sampleforest}, $\pi_{F} = 165(10)92438(11)7(12)(13)$.  As before, we say that forest $F$ avoids pattern $\rho$ if $\pi_{F}$ avoids $\rho$.  Note, from our example, that forests can be composed of heaps with varying numbers of vertices or even heaps that are $k$-ary for different values of $k$.

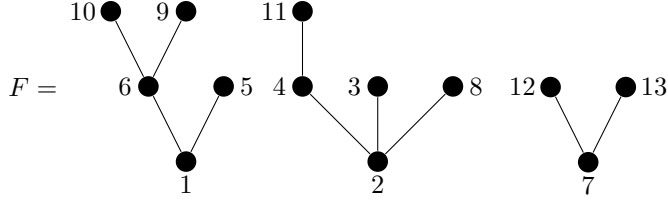
\begin{figure}
\begin{center}

$F =$\raisebox{-0.5\height}{
\begin{tikzpicture}[
  level distance=10mm,
  every node/.style={circle,inner sep=.35pt,fill=black},
  level 1/.style={sibling distance=10mm},
  level 2/.style={sibling distance=10mm},
  level 3/.style={sibling distance=5mm}
]
\node[label=below:1] {\phantom{x}} [grow=up]
  child {node[label=right:5] {\phantom{x}}}
	child {node[label=left:6] {\phantom{x}}
    child {node[label=left:9] {\phantom{x}}}
    child {node[label=left:10] {\phantom{x}}}
  };
\end{tikzpicture}}\hspace{-0.1in}
\raisebox{-0.5\height}{
\begin{tikzpicture}[
  level distance=10mm,
  every node/.style={circle,inner sep=.35pt,fill=black},
  level 1/.style={sibling distance=10mm},
  level 2/.style={sibling distance=10mm},
  level 3/.style={sibling distance=5mm}
]
\node[label=below:{2}] {\phantom{x}} [grow=up]
  child {node[label=right:{8}] {\phantom{x}}}
	child {node[label=left:{3}] {\phantom{x}}}
       child {node[label=left:{4}] {\phantom{x}}
       child {node[label=left:{11}] {\phantom{x}}}}
  ;
\end{tikzpicture}}
\raisebox{-0.8\height}{
\begin{tikzpicture}[
  level distance=10mm,
  every node/.style={circle,inner sep=.35pt,fill=black},
  level 1/.style={sibling distance=10mm},
  level 2/.style={sibling distance=10mm}
]
\node[label=below:7] {\phantom{x}} [grow=up]
  child {node[label=right:13] {\phantom{x}}}
	child {node[label=left:12] {\phantom{x}}
  };
\end{tikzpicture}}
\end{center}

\caption{A 13-node binary heap forest}
\label{sampleforest}
\end{figure}

The consideration of heap forests introduces a number of new parameters to our problem, so we restrict our work to forests of binary (or more generally $k$-ary) shrubs.  A \emph{shrub} is a tree whose root has only leaves as children.  In a binary shrub, each root vertex has exactly two descendants, so a binary shrub forest has $3n$ vertices where $n$ is the number of heaps in the forest, while similarly in a $k$-ary shrub forest we have $(k+1)n$ vertices.

%\note{Corrected to define notation before use and to define the notation actually needed.}
Let $\mathcal{F}^k_n$ be the set of all $k$-ary shrub forests of $n$ heaps.  In Figure~\ref{sampleshrub}, we see a member $F$ of $\mathcal{F}_{4}^2$ where $\pi_F = (10)(12)(11)129348576$.  Let $\mathcal{F}^k_n(P)$ be the set of members of $\mathcal{F}^k_n$ that avoid all patterns in list $P$ and $$\mathcal{S}_n^2(P) = \left\{\pi \in \mathcal{S}_{3n} \mid \pi = \pi_f \text{ for some } f \in \mathcal{F}_n^2(P)\right\}.$$
Equivalently, $$\mathcal{S}_n^2(P) = \left\{\pi \in \mathcal{S}_{3n}(P) \mid  \pi_{3i+1}<\pi_{3i+2} \text{ and }\pi_{3i+1}<\pi_{3i+3} \text{  for all  }0\leq i<n \right\}. $$

Our main goal is to determine $\left|\mathcal{S}_n^2(P)\right|$ where $P \subseteq \mathcal{S}_3$.

\begin{figure}
\begin{center}

$F =$\raisebox{-0.5\height}{
\begin{tikzpicture}[
  level distance=10mm,
  every node/.style={circle,inner sep=.35pt,fill=black},
  level 1/.style={sibling distance=10mm},
  level 2/.style={sibling distance=10mm},
  level 3/.style={sibling distance=5mm}
]
\node[label=below:10] {\phantom{x}} [grow=up]
  child {node[label=right:11] {\phantom{x}}}
	child {node[label=left:12] {\phantom{x}}};
\end{tikzpicture}}\hspace{-0.1in}
\raisebox{-0.5\height}{
\begin{tikzpicture}[
  level distance=10mm,
  every node/.style={circle,inner sep=.35pt,fill=black},
  level 1/.style={sibling distance=10mm},
  level 2/.style={sibling distance=10mm},
  level 3/.style={sibling distance=5mm}
]
\node[label=below:{1}] {\phantom{x}} [grow=up]
  child {node[label=right:{9}] {\phantom{x}}}
	child {node[label=left:{2}] {\phantom{x}}};
\end{tikzpicture}}\hspace{-0.1in}
\raisebox{-0.5\height}{
\begin{tikzpicture}[
  level distance=10mm,
  every node/.style={circle,inner sep=.35pt,fill=black},
  level 1/.style={sibling distance=10mm},
  level 2/.style={sibling distance=10mm},
  level 3/.style={sibling distance=5mm}
]
\node[label=below:3] {\phantom{x}} [grow=up]
  child {node[label=right:8] {\phantom{x}}}
	child {node[label=left:4] {\phantom{x}}};
\end{tikzpicture}}\hspace{-0.1in}
\raisebox{-0.5\height}{
\begin{tikzpicture}[
  level distance=10mm,
  every node/.style={circle,inner sep=.35pt,fill=black},
  level 1/.style={sibling distance=10mm},
  level 2/.style={sibling distance=10mm},
  level 3/.style={sibling distance=5mm}
]
\node[label=below:{5}] {\phantom{x}} [grow=up]
  child {node[label=right:{6}] {\phantom{x}}}
	child {node[label=left:{7}] {\phantom{x}}};
\end{tikzpicture}}

\end{center}
\caption{A binary shrub forest with 12 vertices}
\label{sampleshrub}
\end{figure}
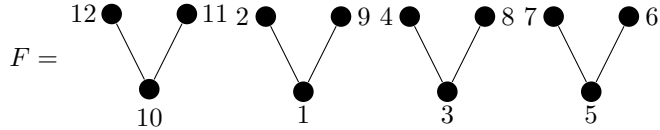

%\delgray{
%\note{I think this paragraph and Table~\ref{bounds} should be deleted.}
%Before determining $\left|\mathcal{S}_n^2(P)\right|$ for specific choices of $P$, we give two naive upper bounds on $\left|\mathcal{S}_n^2(P)\right|$.  First, notice that $\left|\mathcal{F}_n^2\right| = {(3n)!}/{3^n}$ since of the $(3n)!$ permutations of $[3n]=\{1,\dots,3n\}$, only the permutations where $\pi_{3i+1}<\pi_{3i+2}$ and $\pi_{3i+1}<\pi_{3i+3}$ for $0 \leq i < n$, are valid labelings of a binary shrub forest.  A more restrictive bound occurs when $P$ contains a pattern from $\mathcal{S}_3$.  We know that for $\rho \in \mathcal{S}_3$, $\left|\mathcal{S}_n(\rho)\right| = C_n$, where $C_n = \frac{1}{n+1}{\binom{2n}{n}}$ is the $n$th Catalan number.   Since $\mathcal{S}_n^2(\rho) \subseteq \mathcal{S}_{3n}(\rho)$, when $\rho \in \mathcal{S}_3$, we have that $\left|\mathcal{S}_n^2(\rho)\right| \leq C_{3n}$.  Direct computation shows that although ${(3n)!}/{3^n}$ is the tighter bound for $n\leq 2$, for $n\geq 3$, $C_{3n}$ is much more restrictive. See Table~\ref{bounds}.
%} % \delgray

%\delgray{
%\begin{table}
%\delgray{
%\begin{center}
%\begin{tabular}{|c|cccc|c|}
%\hline
%&\multicolumn{4}{|c|}{Terms}&OEIS\\
%\hline
%$C_{3n}$&5,& 132,& 4862,& $208012,\dots$&A187357\\
%\hline
%$\frac{(3n)!}{3^n}$&2,&80,& 13440,& $5913600,\dots$&A210277\\
%\hline
%\end{tabular}
%\end{center}
%\caption{Two naive upper bounds on $\left|\mathcal{S}_{n}^2(\rho)\right|$ for $\rho \in \mathcal{S}_3$}
%\label{bounds}
%} % \delgray
%\end{table}
%} % \delgray

In the rest of this paper, we determine $\left|\mathcal{S}_n^2(\rho)\right|$ exactly for each $\rho\neq 321$.
 A list of sequences and corresponding reference numbers from the On-Line Encyclopedia of Integer Sequences~\cite{OEIS} is given in Table~\ref{results}. This includes results for $\mathcal{S}_n^2(P)$ where $P$ contains more than one pattern of length 3.  Details of the enumerations when $\left|P\right| >1$ are omitted from this paper due to length, but can be found as an ancillary file attached to this arXiv submission at
 \begin{center}
 	\href{http://arxiv.org/src/1510.08036/anc/MultiplePatterns.pdf}{http://arxiv.org/src/1510.08036/anc/MultiplePatterns.pdf}.
 \end{center}
 %at {\color{red}{(ADD ARXIV reference)}}.

%\addtext{
%\note{Added summary of results / overview.}
In the next section, we look at each of the sets avoiding a single pattern of length 3.
For five of the patterns, we enumerate the set by establishing a bijection with a family of lattice paths from $(0,0)$ to $(m,\ell m)$ for some $\ell\in\mathbb{Q}^+$, bounded above by the line $y=\ell x$.
In four cases, these lattice paths consist of unit east and north steps.
$\mathcal{S}_n^2(123)$ is shown to be equinumerous to such paths with $\ell=2$ (Theorem~\ref{T:av123}), and more generally $\mathcal{F}^k_n(123)$ is equinumerous to such paths with $\ell=k$ (Theorem~\ref{T:av123f}).
$\mathcal{S}_n^2(132)$ is in bijection with these paths with $\ell=3$. Indeed, we prove a more general result concerning $k$-ary forests, that $\mathcal{F}^k_n(132)$ is equinumerous
to these lattice paths with $\ell=k+1$ (Theorem~\ref{T:av132}).
In the case of $\mathcal{S}_n^2(231)$, we establish a bijection with the celebrated case of paths bounded above by $y=\frac{2}{3}x$, the so-called ``Duchon's club model'' (Theorem~\ref{T:av231}), and more generally we outline how $\mathcal{F}^k_n(231)$ is equinumerous to such paths bounded above by $y=\frac{k}{k+1}x$.

We prove that
$\mathcal{S}^2_n(213)$ and $\mathcal{S}^2_n(312)$ are equinumerous (Theorem~\ref{T:av213eq312}) and establish a bijection with lattice paths having east, north and northeast steps, bounded by $y=3x$ (Theorem~\ref{T:av213paths}).
Finally, we investigate $\mathcal{S}_n^2(321)$. We are unable to enumerate this set explicitly. However, using functional equations,
we are able to generate nearly a thousand terms of its enumeration sequence,
prove that its
generating function is algebraic and
determine its minimal polynomial and growth rate (Theorem~\ref{thmMinPoly}).
We conclude in Section~\ref{sectSummary} with some questions.
%} %\addtext

\begin{table}[!hbt]
\begin{center}
\begin{tabular}{|l|l|r|l|}

\hline
$P$& $\left(\left|\mathcal{S}_n^2(P)\right|\right)_{n \geq 1}$ & OEIS~~~~~&Result\\
\hline
$\emptyset$ & $2,80,13440,5913600,\ldots$ & A210277& \\
\hline
123 & $1,3,12,55,273,\ldots$ & A001764& Theorem~\ref{T:av123}\\
\hline
132 & $1,4,22,140,969,\ldots$ &  A002293& Corollary~\ref{C:av132}\\
\hline
213 & \multirow{2}{*}{$2,14,134,1482,17818,\ldots$} & \multirow{2}{*}{A144097}&\multirow{2}{*}{Theorem~\ref{T:av213and312}}\\
312&&&\\
\hline
231 & $2,23,377,7229,151491,\ldots$ & A060941& Theorem~\ref{T:av231}\\
\hline
321 & $2,37,866,23285,679606,\ldots$ & A257995 & Theorem~\ref{thmFunctEqn}\\
\hline
132,213& \multirow{2}{*}{$1,2,4,8,16,\ldots$} & \multirow{2}{*}{A000079}&\multirow{2}{*}{}\\
132,312&&&\\
\hline
132,321 & $1,4,10,19,31,\ldots$ & A005448&\\
\hline
213,231 & \multirow{2}{*}{$2,8,32,128,512,\ldots$} & \multirow{2}{*}{A004171}&\multirow{2}{*}{}\\
231,312&&&\\
\hline
213,312 & \multirow{4}{*}{$2,2,2,2,2,\ldots$} & \multirow{4}{*}{A007395}&\multirow{4}{*}{}\\
213,231,312&&&\\
213,312,321&&&\\
213,231,312,321&&&\\
\hline
213,321 & $2,6,13,23,36,\ldots$ & A143689&\\
\hline
231,321 & $2,12,72,432,2592,\ldots$ & A167747& \\
\hline
312,321 & $2,10,50,250,1250,\ldots$ & A020699&\\
\hline
132,213,321 & $1,2,3,4,5,\ldots$ & A000027& \\
\hline
213,231,321 & $2,4,6,8,10,\ldots$ & $\text{A005843}$& \\
\hline
231,312,321 & $2,6,18,54,162,\ldots$ & A025192&\\
\hline

\end{tabular}
\end{center}
\caption{$\left|\mathcal{S}_n^2(P)\right|$ where $P \subseteq \mathcal{S}_3$}
\label{results}
\end{table}

\section{Avoiding a pattern of length $3$}\label{sect1patt}

%\addtext{
To enumerate our first two sets, we make use of the following result, concerning lattice paths in a wedge, first proved by
Fuss at the end of the 18th century. See~\cite[Section~12.1]{Loehr2011} for a modern presentation of the proof.

\begin{proposition}[{Fuss~\cite{Fuss1791}}]\label{propFuss}%\note{Added a new proposition, used in proofs of Theorems~\ref{T:av123} and~\ref{T:av132}.}
The number of lattice paths with unit East and North steps from $(0,0)$ to $(m,\ell m)$ remaining weakly under the line $y=\ell x$ is given by
$$\frac{1}{\ell m+1}\binom{(\ell+1)m}{m}.$$
\end{proposition}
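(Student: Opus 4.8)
The plan is to give the standard bijective proof via the Cycle Lemma of Dvoretzky and Motzkin, which is exactly the flavor of argument in the cited reference. Throughout I treat $\ell$ as a positive integer (the only case needed in the sequel, where $\ell\in\{2,3,k,k+1\}$). First I would encode each lattice path as a word $w$ of length $(\ell+1)m$ in the letters $\ssE$ (East) and $\ssN$ (North), with $m$ copies of $\ssE$ and $\ell m$ copies of $\ssN$; the unrestricted count of such words is $\binom{(\ell+1)m}{m}$. Reading the path and recording the quantity $\ell x - y$, each $\ssE$ step increases it by $\ell$ and each $\ssN$ step decreases it by $1$, so if I assign the weights $+\ell$ to $\ssE$ and $-1$ to $\ssN$, the condition that every lattice point of the path lies weakly under $y=\ell x$ (i.e. $y\le\ell x$) is exactly the condition that every partial sum of the weight sequence is $\ge 0$. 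The total weight is $m\ell-\ell m=0$, consistent with the endpoint lying on the line.

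The crux is that a weight sequence of total sum $0$ sits at the degenerate boundary of the Cycle Lemma, so I would first perturb it to total sum $1$. Equivalently, I would work in the ``weakly above'' picture (partial sums $\le 0$ becomes $\ge0$ after a sign flip, and the two counts agree by symmetry, see below) and append one extra $\ssN$ step, producing an augmented word of length $N:=(\ell+1)m+1$ with $m$ Easts and $\ell m+1$ Norths. After assigning $+1$ to each North and $-\ell$ to each East, every letter has weight $\le 1$ and the total weight is $(\ell m+1)-\ell m=1$. Now the strict Cycle Lemma applies: among the $N$ cyclic rotations of any such word, exactly $s=1$ has all partial sums strictly positive. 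Moreover, since the total weight is $1$, no word can have a nontrivial period (a period $d\mid N$ would force $N/d$ to divide the total weight $1$, so $d=N$), hence every rotation orbit has full size $N$ and contributes exactly one ``good'' word. Therefore the number of good augmented words is
$$\frac{1}{N}\binom{N}{m}=\frac{1}{(\ell+1)m+1}\binom{(\ell+1)m+1}{m}.$$

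It then remains to identify good augmented words with the paths being counted, and to simplify the constant. A good word has all partial sums $\ge 1$, so its first letter has weight $\ge1$ and must be a North; stripping this leading North gives a bijection with words of length $(\ell+1)m$ having $m$ Easts and $\ell m$ Norths all of whose partial sums are $\ge 0$, i.e. with lattice paths to $(m,\ell m)$ lying weakly on one side of $y=\ell x$. The $180^\circ$ rotation about the center $(m/2,\ell m/2)$ of the bounding box fixes the line $y=\ell x$, preserves monotone $\ssE/\ssN$ paths, and swaps ``weakly above'' with ``weakly below,'' so the two counts coincide and equal the desired quantity. Finally the elementary identity
$$\frac{1}{(\ell+1)m+1}\binom{(\ell+1)m+1}{m}=\frac{((\ell+1)m)!}{m!\,(\ell m+1)!}=\frac{1}{\ell m+1}\binom{(\ell+1)m}{m}$$
produces the stated formula. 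I expect the main obstacle to be purely bookkeeping: choosing the encoding and the appended step so that the weak (touching the line is allowed) versus strict partial-sum conditions line up, and verifying that the factor $\tfrac{1}{\ell m+1}$ emerges correctly from the ``exactly one good rotation'' count. As an alternative route one could instead establish the functional equation $B=1+xB^{\ell+1}$ for the generating function $B(x)=\sum_m a_m x^m$ of these paths (by a first-return decomposition) and extract the coefficients by Lagrange inversion, but the Cycle Lemma argument above is shorter and self-contained.
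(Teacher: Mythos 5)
Your proposal is correct: the cycle-lemma argument (flip to the weakly-above picture so all letter weights are $\le 1$, append a North step to make the total weight $1$, use aperiodicity to get exactly one good word per rotation orbit, strip the leading North, and finish with the $180^\circ$ rotation symmetry and the binomial identity) is sound, and your restriction to positive integer $\ell$ matches every use of the proposition in the paper. The paper gives no proof of its own --- it quotes the result, citing Fuss and Loehr's modern presentation --- and the proof it points to is this same standard cycle-lemma argument, so your approach is essentially the paper's.
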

%} %\addtext

\subsection{Avoiding 123}

%\addtext{
We enumerate forests avoiding 123 by exhibiting a bijection with lattice paths bounded above by $y=kx$.
We begin by establishing the fact that the labels of a 123-avoiding forest are uniquely determined by their root labels.
%} % \addtext

\begin{lemma}\label{L:DecLabels}For any $F \in \mathcal{F}^k_n(123)$ with $k \geq 2$ and $n\geq 0$, the roots of the forest of $F$ have decreasing labels as do the leaves.  \end{lemma}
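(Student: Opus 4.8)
The plan is to exploit the one structural feature that couples the (min-)heap property to $123$-avoidance: inside the word $\pi_F$, the root of each shrub is read before all of its own leaves, the root carries a strictly smaller label than every leaf of its shrub, and the shrubs are concatenated left to right. Write $r_i$ for the label of the root of the $i$-th shrub $H_i$, so that in $\pi_F$ the block contributed by $H_i$ is $r_i$ followed by its $k$ leaf-labels, and these blocks occur in the order $i=1,2,\dots,n$. In every case below the strategy is to assume an offending pair occurs out of decreasing order and then to exhibit a forbidden increasing triple built from a root.

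For the roots I would argue by contradiction. Suppose $r_i<r_j$ with $i<j$. Since $H_j$ has $k\geq 2$ leaves, pick any leaf $\ell$ of $H_j$; the heap property gives $r_j<\ell$. In $\pi_F$ the three entries $r_i,r_j,\ell$ appear in this left-to-right order (root of $H_i$, then root of $H_j$, then a leaf of $H_j$) and satisfy $r_i<r_j<\ell$, an occurrence of $123$, contradicting $F\in\mathcal{F}^k_n(123)$. Hence $r_1>r_2>\cdots>r_n$.

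For the leaves I would run the same argument, now with the relevant root always serving as the small, early witness. Take two leaves $a$ and $b$ with $a$ occurring before $b$ in $\pi_F$, and suppose $a<b$. Let $H_i$ be the shrub containing $a$; its root $r_i$ occurs before $a$ (hence before $b$) and satisfies $r_i<a$. Then $r_i,a,b$ occur in increasing position order with $r_i<a<b$, again a forbidden $123$. Therefore the leaves are strictly decreasing when read in the order they appear in $\pi_F$. The point worth emphasizing is that this one comparison simultaneously covers the case $a,b$ lie in the same shrub and the case they lie in different shrubs, so no case split is needed.

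There is no genuine obstacle; the only thing to be careful about is to record that the argument proves the entire leaf sequence (across all shrubs) is decreasing, not merely that leaves decrease within each shrub. That global statement is precisely what underlies the remark preceding the lemma that a $123$-avoiding forest is determined by its root labels: once $r_1>\cdots>r_n$ are fixed, the remaining labels are forced to fill the leaf positions in decreasing order.
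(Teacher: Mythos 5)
Your proof is correct and takes essentially the same approach as the paper: for two roots in increasing order you build a $123$ from a leaf of the later root, and for two leaves in increasing order you build a $123$ from the parent of the earlier leaf, which is exactly the paper's argument. The observation that the leaf case needs no split between same-shrub and different-shrub pairs is also implicit in the paper's one-line treatment.
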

\begin{proof}
Consider $\pi_F$ and suppose to the contrary that $\pi_i$ and $\pi_j$ are root labels where $i<j$ and $\pi_i<\pi_j$.  Then $\pi_i\pi_j\pi_k \sim 123$ where $\pi_k$ is a descendant of $\pi_j$.  Similarly if $\pi_i$ and $\pi_j$ are leaf labels where $i<j$ and $\pi_i<\pi_j$, then $\pi_k\pi_i\pi_j \sim 123$ where $\pi_k$ is the parent of $\pi_i$. \end{proof}

 Therefore if we know only the root labels of a labeled forest avoiding $123$, the labels for the entire forest are uniquely determined.  Note however, this is not the same as saying that any labeling of roots corresponds with a forest that avoids 123.

%\addtext{
We are now in a position to enumerate $\mathcal{F}_n^k(123)$ and consequently $\mathcal{S}^2_n(123)$.
%}

\begin{theorem}\label{T:av123f}
For $k \geq 2$ and $n \geq 0$,
$$\left|\mathcal{F}_n^k(123)\right|=\frac{1}{k n+1}\binom{(k+1)n}{n}.$$
\end{theorem}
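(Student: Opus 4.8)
The plan is to exhibit an explicit bijection between $\mathcal{F}_n^k(123)$ and the lattice paths enumerated by Proposition~\ref{propFuss} with $\ell=k$ and $m=n$, namely the unit East/North paths from $(0,0)$ to $(n,kn)$ remaining weakly below $y=kx$. Once such a bijection is in place, the claimed formula $\frac{1}{kn+1}\binom{(k+1)n}{n}$ is immediate from Fuss' count, so all the work lies in constructing and verifying the correspondence.

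First I would upgrade Lemma~\ref{L:DecLabels} to a full characterization: a valid $k$-ary shrub forest avoids $123$ \emph{if and only if} its roots decrease and its leaves decrease in the order read by $\pi_F$. The lemma supplies one direction. For the converse I would run a short case analysis on a putative increasing triple according to whether each of its three positions is a root or a leaf. The two decreasing conditions kill every case except (leaf, root, leaf); that last case is eliminated by noting that any leaf occurring before a root $r$ in $\pi_F$ belongs to an earlier shrub, hence exceeds its own root, which in turn exceeds $r$ since roots decrease, so the leaf cannot be the smallest entry of the triple. Granting this, a forest in $\mathcal{F}_n^k(123)$ is determined entirely by its set $R\subseteq\{1,\dots,(k+1)n\}$ of root labels: reconstruct it canonically by listing the roots in decreasing order as shrub roots and filling the leaf slots with the remaining labels in decreasing order.

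I would then define the candidate map by scanning the labels $1,2,\dots,(k+1)n$ in increasing order, recording an East step for each root label and a North step for each leaf label; this yields a monotone path from $(0,0)$ to $(n,kn)$, and the forest can be recovered from the path by reading off $R$, so invertibility is clear on both sides. The crux, and the step I expect to be the main obstacle, is to prove that the path lies weakly below $y=kx$ exactly when the canonical reconstruction from $R$ is a genuine heap forest, i.e.\ each root is smaller than its $k$ leaves.

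To handle this crux I would translate the heap condition into a statement about the path. Because leaves decrease, the shrub whose root is the $i$-th largest receives the $i$-th block of $k$ largest leaves, so the heap condition for that shrub is precisely that the $i$-th largest root is less than the $(ik)$-th largest leaf. Reindexing, this says that for each $j$ the $j$-th smallest root has fewer than $k(j-1)+1$ leaves below it, equivalently that just before the $j$-th East step the path has taken at most $k(j-1)$ North steps. Since the constraint $y\le kx$ is tightest immediately before each East step (North steps only raise $y$, East steps only raise $x$), this family of inequalities for $j=1,\dots,n$ is equivalent to the path staying weakly below $y=kx$ at every lattice point, with the terminal point $(n,kn)$ lying on the line. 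Combining the three ingredients shows the map is a bijection between $\mathcal{F}_n^k(123)$ and the Fuss paths, and Proposition~\ref{propFuss} finishes the count.
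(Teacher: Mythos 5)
Your proposal is correct and takes essentially the same approach as the paper: a 123-avoiding forest is determined by its set of root labels (roots and leaves both decreasing), root labels become East-step positions, the heap condition translates exactly into the constraint $y \le kx$ checked just before each East step, and Proposition~\ref{propFuss} with $\ell=k$ finishes the count. You are somewhat more thorough than the paper in proving the converse of Lemma~\ref{L:DecLabels}; incidentally, your (leaf, root, leaf) case needs no special argument, since the two leaves in that triple already form a descent by the leaf-decreasing condition.
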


\begin{proof}
We will show that $\mathcal{F}^k_n(123)$ is in bijection with NE lattice paths from $(0,0)$ to $(n,kn)$ weakly below the line $y=kx$.
The result then follows by Proposition~\ref{propFuss} with $\ell=k$.%\note{Added use of Fuss' Proposition.}

A 123-avoiding forest of $k$-ary shrubs is uniquely determined by its roots.  Therefore, we must choose a set of $n$ root labels, $1=r_1<r_2<\cdots <r_n$ such that there are at least $k$ unused labels larger than $r_n$, at least $2k$ unused labels larger than $r_{n-1}$, and in general at least $k(n-i+1)$ unused labels larger than $r_i$ for $1 \leq i \leq n$. Then the roots of the forest have labels $r_n, r_{n-1}, \dots, r_2, r_1$, the leaves use the remaining labels in decreasing order, and each tree has leaves with larger labels than the roots.

Similarly, in a NE lattice path from $(0,0)$ to $(n,kn)$ weakly below the line $y=kx$, we must choose $n$ steps to be East steps.  To stay below the line $y=kx$, the $i$th east step must have $k(n-i+1)$ north steps after it ($1 \leq i \leq n$).

Thus, 123-avoiding $k$-ary shrub forests are in bijection with NE lattice paths below the line $y=kx$ in the following way: given $F \in \mathcal{F}^k_n(123)$, let $r_i=\pi_{(k+1)(i-1)+1}$ for $1 \leq i \leq n$ be the roots of the forest corresponding to $\pi_F$.  Now construct a NE-lattice path of $n$ East steps and $kn$ North steps so that the $j$th step is an East step if and only if $j \in \{r_1, \dots, r_n\}$.  This map is easily invertible.  Given a NE-lattice path below $y=kx$, let $r_1< \cdots <r_n$ be the positions of the East steps.  Then use $r_1, \dots, r_n$ in decreasing order as the labels of the roots for a forest, and place the unused labels in decreasing order on the roots.
\end{proof}

We then obtain our desired result by setting $k=2$.

\begin{corollary}\label{T:av123}
$\left|\mathcal{S}^2_n(123)\right| = \dfrac{1}{2n+1}{\dbinom{3n}{n}}$.
\end{corollary}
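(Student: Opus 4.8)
The plan is to deduce this immediately from Theorem~\ref{T:av123f} by specialising the general formula to $k=2$. Since that theorem computes $\left|\mathcal{F}^2_n(123)\right|$, the only thing I need to check is that $\left|\mathcal{S}^2_n(123)\right| = \left|\mathcal{F}^2_n(123)\right|$. By definition $\mathcal{S}^2_n(P)$ is the image of $\mathcal{F}^2_n(P)$ under the map $F \mapsto \pi_F$, so it suffices to argue that this map is injective on $\mathcal{F}^2_n(123)$, i.e.\ that a labelled binary shrub forest can be recovered from its associated permutation. First I would observe that for a forest of $n$ binary shrubs the word $\pi_F$ splits into $n$ consecutive blocks of length three, the $i$th block consisting of the root label $\pi_{3(i-1)+1}$ followed by its two leaf labels $\pi_{3(i-1)+2},\pi_{3(i-1)+3}$. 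Because the underlying unlabelled shape is fixed (every shrub is a root with exactly two children, and the forest has a fixed number of shrubs), reading off these blocks reconstructs the labelled forest uniquely, so $F \mapsto \pi_F$ is a bijection from $\mathcal{F}^2_n(123)$ onto $\mathcal{S}^2_n(123)$.

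With this identification the two cardinalities agree, and substituting $k=2$ into the formula of Theorem~\ref{T:av123f} yields
$$\left|\mathcal{S}^2_n(123)\right| = \left|\mathcal{F}^2_n(123)\right| = \frac{1}{2n+1}\binom{3n}{n}.$$
I do not expect any genuine obstacle here: all of the combinatorial content is carried by Theorem~\ref{T:av123f}, and the corollary is a one-line specialisation once the routine bijection between $\mathcal{S}^2_n(123)$ and $\mathcal{F}^2_n(123)$ has been noted.
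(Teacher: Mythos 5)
Your proof is correct and takes the same route as the paper, which obtains this corollary simply by setting $k=2$ in Theorem~\ref{T:av123f}. The injectivity of $F \mapsto \pi_F$ that you spell out is left implicit in the paper, where it is immediate from the equivalent characterization of $\mathcal{S}^2_n(P)$ as the set of $\pi \in \mathcal{S}_{3n}(P)$ with $\pi_{3i+1}<\pi_{3i+2}$ and $\pi_{3i+1}<\pi_{3i+3}$ for all $0\leq i<n$.
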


The 3 members of $\mathcal{S}^2_2(123)$ and their corresponding NE lattice paths are shown in Figure~\ref{av123}.

\begin{figure}
\begin{center}
\raisebox{-0.3\height}{
\begin{tabular}{c}\scalebox{0.4}{\begin{tikzpicture}
\draw[help lines] (0,0) grid (2,4);
\draw[line width=2pt] (0,0) --(2,0)--(2,4);
\draw[dashed] (0,0)--(2,4);
\end{tikzpicture}}\\
\textbf{EE}NNNN\\ \textbf{2}65\textbf{1}43\end{tabular}}
\raisebox{-0.3\height}{
\begin{tabular}{c}\scalebox{0.4}{\begin{tikzpicture}
\draw[help lines] (0,0) grid (2,4);
\draw[line width=2pt] (0,0) --(1,0) --(1,1)--(2,1)--(2,3)--(2,4);
\draw[dashed] (0,0)--(2,4);
\end{tikzpicture}}\\
\textbf{E}N\textbf{E}NNN\\ \textbf{3}65\textbf{1}42\end{tabular}}
\raisebox{-0.3\height}{
\begin{tabular}{c}\scalebox{0.4}{\begin{tikzpicture}
\draw[help lines] (0,0) grid (2,4);
\draw[line width=2pt] (0,0) --(1,0) --(1,2)--(2,2)--(2,4);
\draw[dashed] (0,0)--(2,4);
\end{tikzpicture}}\\
\textbf{E}NN\textbf{E}NN\\ \textbf{4}65\textbf{1}32\end{tabular}}
\end{center}
\caption{$\mathcal{S}^2_2(123)$ and corresponding lattice paths}
\label{av123}
\end{figure}
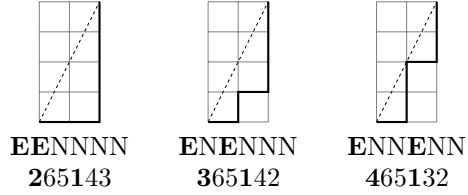
\vspace{.1in}

%The proof of Theorem~\ref{T:av123f} is identical to the proof of Theorem~\ref{T:av123}, replacing all instances of 2 with $k$, and $r_i =\pi_{3(i-1)+1}$ with $r_i=\pi_{(k+1)(i-1)+1}$.

\subsection{Avoiding 132}

%\addtext{
We enumerate forests avoiding 132 by giving a bijection with lattice paths bounded above by $y=3x$.
Indeed, we establish a stronger result, applicable to all $k$-ary shrub forests, exhibiting a bijection between $\mathcal{F}^k_n(132)$ and lattice paths bounded in the wedge below $y=(k+1)x$.

As with forests avoiding 123, we start by establishing that the labels of a 132-avoiding forest are also determined by their root labels.
%} % \addtext

\begin{lemma}\label{L:av132}
The labels of a 132-avoiding k-ary shrub forest are uniquely determined by their root labels.
\end{lemma}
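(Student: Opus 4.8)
The plan is to mirror the structure of Lemma~\ref{L:DecLabels}, but account for the fact that a $132$-avoiding forest will not have a single global monotone pattern on roots and leaves. Instead, I would argue that once the multiset of root labels is fixed, the constraint of avoiding $132$ forces every unfilled label position to be assigned in a unique, deterministic way. The key observation is that in $\pi_F$ the root of each shrub precedes all of its own leaves, and the shrubs appear left-to-right in the concatenation order; so any violation of the pattern will involve a root playing the role of the ``$1$'' and two later entries playing the roles of the ``$3$'' and the ``$2$''.

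First I would set up notation: write the roots in their order of appearance as $r_1, r_2, \dots, r_n$ (so $r_i = \pi_{(k+1)(i-1)+1}$), and let the leaves of the $i$th shrub occupy the $k$ positions immediately following $r_i$. Since each root is smaller than its own leaves (the heap condition), within a single shrub the root-then-leaves block contributes the value of the root followed by $k$ larger values. I would then analyze how avoiding $132$ constrains the relative order of leaves across shrubs and the relative order of the leaves within a shrub. The natural claim is that, given the set of root labels, the remaining (leaf) labels must be distributed so that (a) the leaves of each shrub are forced into a specific increasing or decreasing arrangement, and (b) the assignment of which leftover labels go to which shrub is uniquely determined by the $132$-avoidance condition together with the heap condition. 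Concretely, I expect that knowing the root labels pins down, for each shrub, exactly which labels are available to be its leaves, because a $132$ pattern would arise precisely when a small leaf label is ``skipped over'' in favor of a larger one later.

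The main step is to prove the uniqueness: I would suppose two distinct valid labelings share the same root labels and derive a contradiction by locating the leftmost position where they disagree. At that position one labeling places a smaller value and the other a larger value; using that all unused labels are determined by the roots, I would exhibit an explicit occurrence of $132$ in whichever labeling made the ``wrong'' choice (the root of the relevant shrub serving as the ``$1$'', and two appropriately chosen later entries as the ``$3$'' and ``$2$''). This reduces the lemma to a local, finite case check at a single disagreement point.

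The hard part will be handling the interaction \emph{between} shrubs rather than within a single shrub: unlike the $123$ case, where roots and leaves are each globally monotone and the argument is immediate, here the $132$-avoidance couples the choice of leaf labels in one shrub to the root labels of later shrubs. I anticipate the delicate point is showing that no ``rearrangement'' of leaf labels among different shrubs can preserve $132$-avoidance, which is exactly what forces uniqueness. I would expect the authors to resolve this by a careful description of the greedy/forced filling rule (assign the available labels to leaf slots in the unique order dictated by avoidance), and I would structure my proof to make that filling rule explicit and then verify it is the only possibility.
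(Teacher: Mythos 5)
Your plan follows essentially the same route as the paper: the paper's proof shows that each root's leaves must be, from left to right, the $k$ smallest unused labels exceeding that root, because any skipped label $y$ would reappear later and form $xzy \sim 132$ with the root $x$ as the ``1'' and a leaf $z$ of $x$ as the ``3'' --- exactly the greedy forced-filling rule and skipped-label mechanism you describe. Your leftmost-disagreement formulation of uniqueness is just a repackaging of this same argument, so the proposal is correct and matches the paper's approach.
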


\begin{proof}
Define $S$ to be the ordered set of currently unused labels in a forest of $t$ heaps with $v=k+1$ vertices each, and imagine we are assigning labels from left to right on heaps, and in breadth-first order on each heap. Initially, $S = \{1, 2, \ldots, vt\}$. The labels in each heap are clearly larger than the label of their root, and in each heap are increasing (in order to avoid $132$). In fact, not only are they larger, they are the smallest possible unused labels. In other words, the leaves of each root are labelled, from left to right, by the $k$ smallest unused labels greater than the root. If not, the next largest label $y$ would be used later (on its right) and would create a $xzy \sim 132$ pattern where $z$ is a leaf of (the root) $x$ but not $y$. Thus we only need know the label of the roots in order to deduce the entire labeling of a $132$-avoiding k-ary shrub forest of heaps.  Furthermore, knowing the first root and the longest decreasing subsequence of roots is sufficient since if the roots are $r_1, r_2, \dots, r_t$ and we have $r_i < r_{i+1} < \cdots < r_{j-1} > r_j$ then $r_i > r_j$ in order to avoid 132.
\end{proof}

Again, Lemma~\ref{L:av132} is not equivalent to saying that any labeling of roots corresponds with a forest that avoids 132.

It turns out that we can use the structure of 132-avoiding forests to describe not only $\mathcal{S}^2
_n(132)$, but more generally $\mathcal{F}^k_n(132)$.

\begin{theorem}
\label{T:av132}
$\big|\mathcal{F}^k_n(132)\big| = \dfrac{1}{(k+1)n+1}\dbinom{(k+2)n}{n}$.
\end{theorem}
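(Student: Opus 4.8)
The plan is to realise $\mathcal{F}^k_n(132)$ as the set of north-east lattice paths from $(0,0)$ to $(n,(k+1)n)$ lying weakly below the line $y=(k+1)x$; once this bijection is in hand the formula is immediate from Proposition~\ref{propFuss} applied with $\ell=k+1$ and $m=n$, since that count is precisely $\frac{1}{(k+1)n+1}\binom{(k+2)n}{n}$.

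By Lemma~\ref{L:av132} a forest $F\in\mathcal{F}^k_n(132)$ is determined by its sequence of roots $r_1,\dots,r_n$ (in left-to-right heap order), the leaves of each heap being the $k$ smallest still-unused labels exceeding its root. I would encode $F$ by scanning the heaps from left to right while maintaining the set of available (as-yet-unused) labels: for heap $i$ let $g_i$ be the number of available labels smaller than $r_i$, so that $r_i$ is the $(g_i+1)$-st smallest available label; equivalently, $g_i$ counts the entries smaller than $r_i$ that lie in later heaps. I then map $F$ to the lattice path whose $j$-th East step (reading left to right) occurs at height $g_{n+1-j}$, i.e.\ the path in which exactly $g_{n+1-j}$ North steps precede the $j$-th East step. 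Because the last heap must exhaust the remaining labels, $r_n$ is always the smallest available label and $g_n=0$, matching the forced initial East step.

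Two facts then need verification. First, a counting observation governs the wedge: when heap $i$ is reached, $(k+1)(n-i+1)$ labels remain and the $k$ leaves chosen all exceed $r_i$, so the $g_i$ labels below $r_i$ survive into the final $n-i$ heaps, giving $g_i\le(k+1)(n-i)$; conversely this very inequality is what leaves room above $r_i$ for the $k$ leaves, and under the substitution $h_j=g_{n+1-j}$ it becomes $h_j\le(k+1)(j-1)$, which is exactly the condition that the path stay weakly below $y=(k+1)x$. Second, the substantive point is that $F$ avoids $132$ if and only if $(g_i)$ is weakly decreasing (equivalently the East heights $g_n\le\cdots\le g_1$ increase weakly, as any path demands). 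Necessity is short: if $g_{i+1}>g_i$ one first checks $r_i<r_{i+1}$ (for $r_i\ge r_{i+1}$ would make every available label below $r_{i+1}$ also lie below $r_i$, forcing $g_{i+1}\le g_i$), and then there is an available label $x$ with $r_i<x<r_{i+1}$ that, being smaller than $r_{i+1}$, is consumed only in a heap after heap $i+1$; the entries $r_i,r_{i+1},x$ then form a $132$ pattern.

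The main obstacle is the converse, that a weakly decreasing $(g_i)$ never produces a $132$; here the greedy description of the leaves is decisive. Given a putative occurrence $\pi_a<\pi_c<\pi_b$ with $a<b<c$, I would first show the large entry $\pi_b$ may be taken to be a \emph{root}: if $\pi_b$ were a leaf of heap $q$ then minimality of leaves forces $\pi_c<r_q$ (any available label strictly between $r_q$ and $\pi_b$ would have been chosen as a leaf, so none survives to position $c$), and one checks $\pi_a$ lies in an earlier heap, so that $\pi_a,r_q,\pi_c$ is again a $132$ with the root $r_q$ in the middle. With the large entry a root, I would peel off the leftmost heap---which occupies the contiguous increasing block $\{r_1,\dots,r_1+k\}$ and so contributes no internal $132$ and can only act as the ``small'' entry---reducing to the claim that all entries exceeding $r_1$ appear in increasing order in the remaining forest; since $(g_2,\dots,g_n)$ is again weakly decreasing, this follows by induction on $n$. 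Granting the equivalence, the map is manifestly invertible (from a path one reads off $g_1,\dots,g_n$, recovers each root by its rank among the available labels, and fills leaves greedily), hence a bijection, and the theorem follows from Proposition~\ref{propFuss}; specialising to $k=2$ gives $\big|\mathcal{S}^2_n(132)\big|=\frac{1}{3n+1}\binom{4n}{n}$.
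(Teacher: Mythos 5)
Your encoding is essentially the paper's bijection in disguise (on $132$-avoiding forests your $g_i$ equals the paper's running minimum of the roots, minus one), and most of your proposal is sound: the wedge inequality $g_i\le(k+1)(n-i)$, the necessity direction (an ascent $g_i<g_{i+1}$ produces a $132$ via an available label $x$ with $r_i<x<r_{i+1}$), and the reduction of an arbitrary $132$ to one whose largest entry is a root are all correct, and indeed more carefully argued than the corresponding steps in the paper. The gap is in the final step of the sufficiency direction ($g$ weakly decreasing $\Rightarrow$ no $132$), which is the step that makes the map from paths land inside $\mathcal{F}^k_n(132)$ and is therefore essential to the count.

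You assert that the claim ``all entries exceeding $r_1$ appear in increasing order in the remaining forest'' follows by induction on $n$ because $(g_2,\dots,g_n)$ is again weakly decreasing. But any inductive hypothesis about the tail — that it avoids $132$, or that its entries exceeding \emph{its own} first root $r_2$ increase — is a statement about the threshold $r_2$, not $r_1$, and it does not transfer. Concretely, take $k=2$, $n=3$ and the forest $123\;567\;489$: its tail $567\;489$ is greedy, has weakly decreasing $g$-values $(1,0)$, avoids $132$, and its entries exceeding $r_2=5$ increase, yet the tail entries exceeding $r_1=1$, namely $5,6,7,4,8,9$, do not increase. (This forest does not contradict your equivalence, since the full sequence $(g_1,g_2,g_3)=(0,1,0)$ is not weakly decreasing; it shows instead that no induction using only properties of the tail can prove your claim.) The missing ingredient is precisely the comparison $g_1\ge g_2$, which you never invoke in this direction: since $g_1=r_1-1$, and $g_2=r_2-(k+2)$ whenever $r_2>r_1$, weak decrease forces $r_2\le r_1+k+1$, i.e.\ no label lies strictly between $r_1+k$ and $r_2$; hence every tail entry exceeding $r_1$ is at least $r_2$, and these increase by the inductive hypothesis (with $r_2$ standing first), while if $r_2<r_1$ the tail entries exceeding $r_1$ form a subsequence of those exceeding $r_2$. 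You must also carry the two conclusions — ``avoids $132$'' and ``entries above the first root increase'' — simultaneously through the induction, since a $132$ lying entirely below $r_1$ is excluded by the first conclusion for the tail, not the second. With these repairs your argument closes; for comparison, the paper disposes of the same point with its (also rather terse) assertion that ``by construction, all entries larger than a given root and after the root appear in increasing order.''
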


\begin{proof}We provide a bijection from $\mathcal{F}^k_n(132)$ to the set $P$ of paths under the line $y=(k+1)x$ from $(0,0)$ to $(n,(k+1)n)$ using North $(0,1)$ and East $(1,0)$ steps. %\addtext{
The result then follows by Proposition~\ref{propFuss} with $\ell=k+1$.%}\note{Added use of Fuss' Proposition.}

Define $\phi: P \to \mathcal{F}^k_n(132)$ as follows.  Given $p \in P$, let $0 = w_1 \leq w_2 \leq \cdots \leq w_n$ be the heights of the East steps in $p$.  Let $w^{\prime} := (w_n+1)(w_{n-1}+1)\cdots (w_1+1)$.  Label the first root $w^{\prime}_1$.  For each subsequent root, if $w^{\prime}_i \neq w^{\prime}_{i-1}$, then label root $i$ with $w^{\prime}_i$.  If $w^{\prime}_i = w^{\prime}_{i-1}$, leave root $i$ unlabeled.  Now, by Lemma~\ref{L:av132} label the remaining forest vertices from left to right (and in breadth-first order on each heap), at each point using the smallest unused label larger than the most recent root.  We claim this forest avoids 132.  Suppose there were a 132 pattern not involving a root where $\ell$ plays the role of 1.  Since $\ell$'s root is smaller than $\ell$, there is another copy of 132 using $\ell$'s root.  Now, by construction, all entries larger than a given root and after the root appear in increasing order, so this forest is 132-avoiding.

Next we describe $\phi^{-1}: \mathcal{F}^k_n(132) \to P$, beginning with a 132-avoiding forest. We create a string $y$, with $y_1$ being the root of the first heap. We add an element to the string $y$ for each root $r_i$. If the permutation has been only increasing since $r_{i-1}$, set $y_i = y_{i-1}$. If the permutation has had a descent (which could only occur between the last leaf of the tree with root $r_{i-1}$ and the root $r_i$), set $y_i = r_i$. We then create a string $y'$ by subtracting $1$ from each element of $y$. Finally we reverse the string to obtain $y'^r$, which is our string displaying heights of the East steps in our path, which we claim lies below the line $y=(k+1)x$.  Notice that a path lies below $y=(k+1)x$ if and only if $y'^r_i \leq (i-1)(k+1)$ for $i \geq 1$.  By construction, $y_1=r_1 \leq n-k$ since $r_1$ has $k$ leaves larger than itself.  In general, if $r_i^* = \min(r_1, \dots, r_i)$, then $r_i^* \leq n-((i-1)+ki) = n+1 -(k+1)i$ since there are $i-1$ other roots and $ki$ leaves larger than $r_i$ as labels on the first $i$ shrubs.  Since $y_i = r_i^*$, we have that $y_i \leq  n+1 -(k+1)i$ for all $i$, and thus $y'_i \leq n-(k+1)i$ for all $i$. Let $t=\frac{n}{k+1}$ be the number of trees in our shrub forest.  Our bound on $y'_i$ implies that $y'^r_{t+1-i} \leq n-(k+1)(t+1-i) = t(k+1)-(k+1)(t+1-i) = (i-1)(k+1)$, as desired.
\end{proof}

This proof gives two easy corollaries. The first demonstrates that although we considered shrubs here, in special cases (such as avoiding $132$) we can characterize the shrub condition more simply in terms of a permutation composed of a string of equal-length subpermutations. The second is our result restricted to binary shrubs.

\begin{corollary}
Let $\sigma$ be a permutation composed of a concatenation of $m$ increasing sequences of length $n$. The number of such $\sigma \in \mathcal{S}_{nm}(132)$ is given by $$\frac{1}{nm+1}\binom{(n+1)m}{m}.$$
\end{corollary}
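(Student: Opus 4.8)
The plan is to recognize this corollary as a re-parametrization of Theorem~\ref{T:av132} rather than a fresh computation. First I would set $k = n-1$ and identify the number of increasing blocks $m$ with the number of heaps, so that a $(n-1)$-ary shrub has exactly $n$ vertices (one root and $n-1$ leaves). Under this dictionary the count of Theorem~\ref{T:av132} becomes $\frac{1}{((n-1)+1)m+1}\binom{((n-1)+2)m}{m} = \frac{1}{nm+1}\binom{(n+1)m}{m}$, which is exactly the claimed value. Hence everything reduces to exhibiting a bijection between the permutations $\sigma$ described in the corollary and $\{\pi_F : F \in \mathcal{F}^{n-1}_m(132)\}$.

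The heart of the argument is that, once $132$-avoidance is imposed, the phrase ``concatenation of $m$ increasing blocks of length $n$'' and ``$132$-avoiding shrub-forest permutation'' describe the same set, and I would prove the two inclusions separately. Given $F \in \mathcal{F}^{n-1}_m(132)$, Lemma~\ref{L:av132} shows that within each shrub the leaves are read off in increasing order and are all larger than their root; hence each length-$n$ block of $\pi_F$ is an increasing run, so $\pi_F$ is a $132$-avoiding concatenation of $m$ increasing blocks of length $n$. Conversely, given such a $\sigma$, I would decode each block by taking its smallest (and therefore leading) entry as a root and the remaining $n-1$ entries as its leaves; since the block increases, every leaf exceeds the root, so this is a legitimate $(n-1)$-ary shrub heap whose breadth-first reading reproduces the block. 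This builds a forest $F$ with $\pi_F = \sigma$, and because $\sigma$ avoids $132$ we get $F \in \mathcal{F}^{n-1}_m(132)$.

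Finally I would observe that $F \mapsto \pi_F$ is injective (the shrub shapes are rigid, so the breadth-first reading is invertible), which makes the two maps above mutually inverse bijections. The enumeration therefore transfers verbatim, and Theorem~\ref{T:av132} with $k=n-1$ yields the stated formula. The one point that genuinely needs care is the first inclusion: a \emph{general} shrub block need only have its minimum in front, so it is Lemma~\ref{L:av132} that upgrades ``minimum first'' to ``fully increasing'' in the presence of $132$-avoidance; in the other direction one checks that an increasing block automatically satisfies the heap/shrub condition. With this equivalence established, the corollary follows at once.
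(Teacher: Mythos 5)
Your proposal is correct and takes essentially the same route as the paper: the paper states this as an ``easy corollary'' of the proof of Theorem~\ref{T:av132}, relying on exactly the equivalence you establish, namely that by Lemma~\ref{L:av132} (more precisely, the observation in its proof) $132$-avoidance forces each shrub's block to be read off in increasing order, so $132$-avoiding shrub-forest permutations and $132$-avoiding concatenations of equal-length increasing blocks are the same set. Your dictionary $k=n-1$, number of heaps $=m$ is the right re-parametrization, and writing out the two inclusions and the injectivity of $F \mapsto \pi_F$ merely makes explicit what the paper leaves implicit.
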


Also, $\left|\mathcal{S}^2_n(132)\right|$ is a special case of Theorem~\ref{T:av132}.

\begin{corollary}\label{C:av132}
$\left|\mathcal{S}^2_n(132)\right| = \left|\mathcal{F}^2_n(132)\right| = \dfrac{1}{3n+1}{\dbinom{4n}{n}}$.
\end{corollary}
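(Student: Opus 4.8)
The plan is to obtain both equalities by combining the general $k$-ary count of Theorem~\ref{T:av132} with the elementary observation that a binary shrub forest is completely recoverable from its associated permutation. The right-hand equality is purely mechanical: specializing Theorem~\ref{T:av132} to $k=2$ gives
$$\left|\mathcal{F}^2_n(132)\right| = \frac{1}{(2+1)n+1}\binom{(2+2)n}{n} = \frac{1}{3n+1}\binom{4n}{n},$$
so all the content lies in justifying the left-hand equality $\left|\mathcal{S}^2_n(132)\right| = \left|\mathcal{F}^2_n(132)\right|$.

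To establish this, I would show that the map $f \mapsto \pi_f$ restricts to a bijection between $\mathcal{F}^2_n(132)$ and $\mathcal{S}^2_n(132)$. By the definition of $\mathcal{S}^2_n(132)$ as the set of permutations arising from forests in $\mathcal{F}^2_n(132)$, the map is surjective onto its target, so it suffices to verify injectivity. Given a permutation $\pi_f \in \mathcal{S}_{3n}$, the equivalent characterization of $\mathcal{S}^2_n(P)$ quoted in the introduction splits the entries into consecutive blocks $\pi_{3i+1}\pi_{3i+2}\pi_{3i+3}$ with $\pi_{3i+1} < \pi_{3i+2}$ and $\pi_{3i+1} < \pi_{3i+3}$, each block encoding exactly one binary shrub: $\pi_{3i+1}$ is the root, and $\pi_{3i+2}$, $\pi_{3i+3}$ are its left and right leaves in that order, since each shrub is a plane tree and the breadth-first reading records its two leaves from left to right. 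Because the root is forced to be the smallest entry of its block and the two leaf positions are distinguishable, the ordered forest $f$ is uniquely determined by $\pi_f$. Hence distinct forests yield distinct permutations, the map is injective, and therefore $\left|\mathcal{S}^2_n(132)\right| = \left|\mathcal{F}^2_n(132)\right|$, which chains with the displayed count above to finish the proof.

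There is essentially no hard step here: once Theorem~\ref{T:av132} is in hand, the corollary is immediate. The only point demanding a moment's care is confirming that the forest-to-permutation correspondence is a genuine bijection rather than merely a surjection, i.e.\ that no information about the plane structure of the shrubs (the left/right ordering of the two leaves of a common root) is lost. This is guaranteed by the breadth-first reading convention, and it is precisely the reason $\mathcal{S}^2_n(P)$ and $\mathcal{F}^2_n(P)$ are equinumerous for every pattern set $P$, not just for $132$; I would state the bijection in this generality and then specialize.
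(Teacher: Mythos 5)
Your proposal is correct and matches the paper's approach: the paper proves this corollary simply by specializing Theorem~\ref{T:av132} to $k=2$, treating the identification of $\mathcal{S}^2_n(132)$ with $\mathcal{F}^2_n(132)$ as immediate from the definition of $\mathcal{S}^2_n(P)$. Your extra verification that $f \mapsto \pi_f$ is injective (via the block decomposition and breadth-first reading) is a sound spelling-out of exactly what the paper leaves implicit.
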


Figure~\ref{av132} shows two examples of lattice paths below $y=3x$ that correspond to members of $\mathcal{S}^2(132)$.

\begin{figure}
\begin{center}
\begin{tabular}{ccc}
\raisebox{-0.5\height}{\scalebox{0.4}{
\begin{tikzpicture}
\draw[help lines] (0,0) grid (3,9);
\draw[line width=2pt] (0,0) --(1,0) --(1,3)--(2,3)--(2,4)--(3,4)--(3,9);
\draw[black,dashed] (0,0)--(3,9);
\end{tikzpicture}}}
&$\qquad$&
\raisebox{-0.5\height}{\scalebox{0.4}{
\begin{tikzpicture}
\draw[help lines] (0,0) grid (3,9);
\draw[line width=2pt] (0,0) --(1,0) --(1,2)--(3,2)--(3,9);
\draw[black,dashed] (0,0)--(3,9);
\end{tikzpicture}}}\\
$w=034=y'^{r}$&&$w=022=y'^{r}$\\
$w^{\prime}=541=y$&&$w^{\prime}=331=y$\\
$\pi = \textbf{5}67\phantom{.}\textbf{4}89\phantom{.}\textbf{1}23$&&$\pi=\textbf{3}45\phantom{.}\mathbf{6}78\phantom{.}\textbf{1}29$\\
\end{tabular}
\end{center}
\caption{Two lattice paths and the permutations corresponding to the 132-avoiding binary shrubs}
\label{av132}
\end{figure}
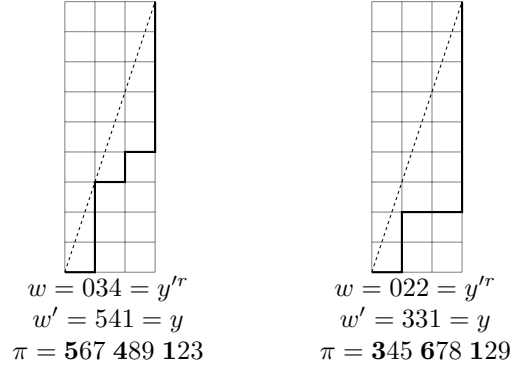

\subsection{Avoiding 213 or 312}

The only pair of permutations $\rho_1,\rho_2 \in \mathcal{S}_3$ for which
$\left|\mathcal{S}^2(\rho_1)\right| = \left|\mathcal{S}^2(\rho_2)\right|$ is $\{\rho_1,\rho_2\}=\{213,312\}$.
%\addtext{
We prove this equivalence in Theorem~\ref{T:av213eq312} below.

To enumerate these sets, we make use of a variant of the result of Fuss we used above, in which diagonal, as well as horizontal and vertical, steps are permitted.

\begin{proposition}[{Schr\"oder~\cite[Theorem~2.9]{Schroder2007}}]\label{propSchroder}%\note{Added new propositions, used in the proof of Theorem~\ref{T:av213and312} to give the exact enumeration.}
The number of lattice paths with steps $(0,1)$, $(1,0)$ and $(1,1)$, from $(0,0)$ to $(m,\ell m)$, remaining weakly below the line $y=\ell x$ is given by
$$\frac{1}{\ell m+1}\sum_{v=0}^{m}\binom{\ell m+1}{m-v}\binom{\ell m+v}{v}.$$
\end{proposition}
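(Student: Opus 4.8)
The plan is to prove this by the cycle lemma of Dvoretzky and Motzkin---the same device that underlies Proposition~\ref{propFuss}---after refining the count according to the number of diagonal steps. Throughout I take $\ell$ to be a positive integer, so that $\ell m\in\mathbb{Z}$ and, since a diagonal step has slope $1\le \ell$, diagonal steps are compatible with staying weakly below $y=\ell x$; this covers the application in Theorem~\ref{T:av213paths}, where $\ell=3$. I encode a path by tracking the quantity $\ell x-y$, whose increments along an East, a North and a diagonal step are $+\ell$, $-1$ and $\ell-1$ respectively. A path from $(0,0)$ to $(m,\ell m)$ stays weakly below $y=\ell x$ precisely when every partial sum of these increments is nonnegative, and the total increment is $\ell m-\ell m=0$.

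First I would fix the number $v$ of diagonal steps, where $0\le v\le m$; such a path then uses $m-v$ East steps, $\ell m-v$ North steps and $v$ diagonal steps. To count the admissible ones I append a single extra North step at the end. The resulting words have length $N=\ell m+m-v+1$, every increment is $\ge -1$, and the total increment is now $-1$. The cycle lemma then guarantees that exactly one of the $N$ cyclic rotations of such a word has all of its proper prefix sums nonnegative; and because the total is $-1$, no word of this multiset can be cyclically periodic (a period $d<N$ would force the total to be divisible by $N/d\ge 2$), so its $N$ rotations are distinct. Hence the number of ``good'' words equals $\frac{1}{N}\binom{N}{m-v,\,\ell m-v+1,\,v}=:T_v$.

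Next I would set up the bijection between these good words and the paths we wish to count. In a good word the final increment equals $-1-s_{N-1}\le -1$, where $s_{N-1}\ge 0$ is the prefix sum of length $N-1$; since $\ell\ge 1$, the only increment that is $\le -1$ is $-1$, so the last step is North and $s_{N-1}=0$. Deleting this North step leaves a word with $m-v$ East, $\ell m-v$ North and $v$ diagonal steps, all of whose partial sums are nonnegative and whose total is $0$---that is, a path from $(0,0)$ to $(m,\ell m)$ with $v$ diagonal steps lying weakly below $y=\ell x$---and appending a North step inverts this. Therefore $T_v$ counts precisely the admissible paths with $v$ diagonal steps.

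Finally, a one-line factorial rearrangement gives $T_v=\frac{1}{\ell m+1}\binom{\ell m+1}{v}\binom{\ell m+m-v}{m-v}$, and summing over $0\le v\le m$ and re-indexing $v\mapsto m-v$ turns $\sum_v T_v$ into $\frac{1}{\ell m+1}\sum_{v=0}^{m}\binom{\ell m+1}{m-v}\binom{\ell m+v}{v}$, as claimed. I expect the only genuine obstacle to be pinning down the correct orientation of the cycle lemma (increments $\ge -1$, total $-1$, and all \emph{proper} prefix sums nonnegative) together with the periodicity check; the concluding manipulation is routine, though it is worth flagging that after the re-indexing the summation variable $v$ no longer records the number of diagonal steps.
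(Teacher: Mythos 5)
Your proof is correct, but there is nothing in the paper to compare it against: Proposition~\ref{propSchroder} is a quoted external result, cited to Schr\"oder~\cite{Schroder2007}, and the paper supplies no proof of it (just as it quotes Proposition~\ref{propFuss} from the literature). Judged on its own merits, your argument is sound and complete: the refinement by the number $v$ of diagonal steps (giving $m-v$ East and $\ell m - v$ North steps), the encoding via increments of $\ell x - y$ (namely $+\ell$, $-1$, $\ell-1$), the appended North step making the total $-1$, the aperiodicity observation that guarantees each rotation orbit has $N=\ell m+m-v+1$ distinct members of which exactly one is good, the forced final North step in a good word, and the concluding identity $T_v=\frac{1}{\ell m+1}\binom{\ell m+1}{v}\binom{\ell m+m-v}{m-v}$ followed by the re-indexing $v\mapsto m-v$ all check out. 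Two remarks. First, your standing assumption that $\ell$ is a positive integer is not just convenient but essentially necessary for the statement to parse (the binomials need $\ell m\in\mathbb{Z}$ for all $m$, and a non-integer slope would also break the compatibility of the $(1,1)$ step with the boundary), and it covers the paper's only application, $\ell=3$ in Theorem~\ref{T:av213and312}. Second, your route is not really an alternative to the original one: the cited source proves this theorem by what Schr\"oder calls the ``rotation principle,'' which is exactly the cycle lemma, so your proof is best viewed as a correct reconstruction of the argument behind the citation rather than a genuinely different approach.
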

We also require the following `folklore' bijection between two families of lattice paths. This proposition follows directly from the invertibility of the affine map of the Euclidean plane implied in its statement.
\begin{proposition}[{Banderier and Wallner~\cite[Proposition~2.1]{BW2015}}]\label{propPathBij}
Lattice paths with step set $S$, from $(0,0)$ to $(m,\ell m)$ remaining weakly below $y=\ell x$
are in bijection with
lattice paths with step set $\{(x+y,\ell x-y):(x,y)\in S\}$, from $(0,0)$ to $((\ell+1)m,0)$ remaining weakly above the $x$-axis.
\end{proposition}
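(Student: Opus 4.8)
The plan is to make explicit the affine map that the statement merely alludes to and verify directly that it transports one path family onto the other. Since both families start at the origin, the relevant map is in fact linear: define $T\colon\mathbb{R}^2\to\mathbb{R}^2$ by $T(x,y)=(x+y,\ \ell x-y)$. Its matrix $\left(\begin{smallmatrix}1&1\\ \ell&-1\end{smallmatrix}\right)$ has determinant $(1)(-1)-(1)(\ell)=-(\ell+1)$, and because $\ell\in\mathbb{Q}^+$ we have $\ell+1\neq0$, so $T$ is a linear isomorphism of the plane with a well-defined inverse $T^{-1}$.

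First I would define the induced map on paths vertex-by-vertex. A path in the source family is a sequence of points $P_0=(0,0),P_1,\dots,P_N=(m,\ell m)$ with every step $P_i-P_{i-1}\in S$; set $Q_i:=T(P_i)$. By linearity $Q_i-Q_{i-1}=T(P_i-P_{i-1})$, so each step of the image path is $T(x,y)=(x+y,\ell x-y)$ for some $(x,y)\in S$, and since $T$ is injective the step set $S$ is carried bijectively onto $\{(x+y,\ell x-y):(x,y)\in S\}$. The endpoints also match: $T(0,0)=(0,0)$ and $T(m,\ell m)=((\ell+1)m,\,0)$.

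It remains to match the two region constraints, which is where all the real content sits. The second coordinate of $Q_i=T(P_i)$ equals $\ell x_i-y_i$, so $Q_i$ lies weakly above the $x$-axis precisely when $y_i\le\ell x_i$, i.e.\ precisely when $P_i$ lies weakly below the line $y=\ell x$. Hence $T$ sends exactly the constrained source paths to the constrained target paths. Finally, since $T$ is a bijection, the vertex-wise map $P\mapsto T(P)$ has two-sided inverse $Q\mapsto T^{-1}(Q)$; the same three checks run in reverse (with the line $y=\ell x$ and the $x$-axis exchanging roles), so the inverse returns target paths to source paths. This gives the claimed bijection.

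I do not anticipate a genuine obstacle: the argument is simply the invertibility of a single linear map, exactly as the statement promises. The only points requiring a moment's care are confirming $\ell+1\neq0$ so that $T$ is invertible (automatic for $\ell\in\mathbb{Q}^+$) and observing that both ``weakly below $y=\ell x$'' and ``weakly above the $x$-axis'' are conditions on the individual lattice points $P_i$, so that verifying them vertex-by-vertex suffices. One should also note that the image vertices and steps need not have integer coordinates, but this is immaterial, since the target family is defined as the formal lattice paths built from the prescribed step set $\{(x+y,\ell x-y):(x,y)\in S\}$.
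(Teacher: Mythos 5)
Your proof is correct and takes exactly the approach the paper intends: the paper's entire argument is the single remark that the result ``follows directly from the invertibility of the affine map of the Euclidean plane implied in its statement,'' and you have simply made that map $T(x,y)=(x+y,\ell x-y)$ explicit and verified its determinant $-(\ell+1)\neq 0$, the endpoint and step-set correspondence, and the equivalence of the two boundary constraints. No discrepancy to report.
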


We enumerate forests avoiding 213 by exhibiting a bijection with a family of lattice paths which, by Proposition~\ref{propPathBij}, are known to be equinumerous to paths bounded above by $y=3x$.
%} % \addtext

\begin{theorem}
\label{T:av213paths}
$\left|\mathcal{S}^2_n(213)\right|  = a_n$ where $a_n$ is the number of lattice paths from $(0,0)$ to $(4n,0)$ with unit steps $(1,3)$, $(2,2)$, and $(1,-1)$ staying weakly above the $x$-axis.
\end{theorem}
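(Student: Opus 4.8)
The plan is to prove Theorem~\ref{T:av213paths} in two stages: first reduce the claimed count $a_n$ to a count of Schröder-type paths bounded by a line, and then biject those paths with $\mathcal{S}^2_n(213)$. For the reduction I would apply Proposition~\ref{propPathBij} with $\ell=3$ and step set $S=\{(0,1),(1,0),(1,1)\}$: the affine map $(x,y)\mapsto(x+y,3x-y)$ sends $(0,1)\mapsto(1,-1)$, $(1,0)\mapsto(1,3)$ and $(1,1)\mapsto(2,2)$, and sends the target $(n,3n)$ to $(4n,0)$. Hence $a_n$ equals the number of lattice paths with North, East and diagonal Northeast steps from $(0,0)$ to $(n,3n)$ that stay weakly below $y=3x$, and it suffices to construct a bijection between $\mathcal{S}^2_n(213)$ and this family; Proposition~\ref{propSchroder} then supplies a closed form for the enumeration.

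First I would analyze the structure of a forest $F\in\mathcal{F}^2_n(213)$ by locating the global minimum $1$, which is necessarily a root. Writing $\pi_F=L\,(1\,a\,b)\,M$, where $L$ consists of the shrubs to the left of the one rooted at $1$, avoidance of $213$ forces every entry of $L$ to exceed every entry of $(a\,b)M$ (else such an entry, the $1$, and a larger entry of $L$ would form $213$); thus $L$ uses the largest labels and is an order-isomorphic copy of a smaller $213$-avoiding forest, while $(1\,a\,b)M$ is a $213$-avoiding forest whose first root is its minimum. Iterating gives a recursive description. Crucially, I would then show (analogously to Lemma~\ref{L:av132}) that the pair of leaf-labels of each shrub, \emph{as a set}, is determined by the roots, so the only remaining freedom is the order of the two leaves within each shrub. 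Inspecting when a shrub may be \emph{decreasing} (larger leaf first) shows this is permitted exactly when its larger leaf $\mu$ exceeds every label occurring later in $\pi_F$: otherwise $\mu$, the smaller leaf, and a larger later entry form a $213$ pattern.

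With this structure in hand the bijection runs as follows. The skeleton recording the roots and leaf-sets is precisely the data of a $132$-avoiding forest, which by (the proof of) Theorem~\ref{T:av132} corresponds to a North/East path weakly below $y=3x$ from $(0,0)$ to $(n,3n)$. Each shrub whose larger leaf dominates all later labels then carries an independent binary choice of leaf order; I would encode a \emph{decreasing} choice by combining the East step of that shrub with an adjacent North step into a single Northeast step. The key point is that the dominance condition permitting a decreasing shrub coincides with the condition that the path has enough vertical slack to admit a diagonal step while remaining weakly below $y=3x$, and conversely every Northeast step of such a path arises this way. This makes the map well-defined and invertible: from a path one reads off the underlying North/East skeleton (hence the roots and leaf-sets via Theorem~\ref{T:av132}) and reverses each Northeast step into a decreasing shrub. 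Composing with Proposition~\ref{propPathBij} yields the paths to $(4n,0)$ in the statement.

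The main obstacle is the leaf-order bookkeeping, which has no analogue in the $123$ and $132$ cases where the leaves were completely determined by the roots (Lemma~\ref{L:DecLabels}, Lemma~\ref{L:av132}). Here there is genuine extra freedom, and the delicate step is to prove that the local ``larger leaf dominates its suffix'' condition is simultaneously equivalent to global $213$-avoidance of the chosen orderings and in exact correspondence with the availability of a Northeast step under $y=3x$, so that the weighted count $\sum 2^{d}$ over skeletons (with $d$ the number of dominating shrubs) reproduces the Schröder path count rather than merely bounding it. If an explicit bijection proves awkward, an alternative is to turn the recursive minimum-decomposition into a system of functional equations for the generating functions of $\mathcal{F}^2_n(213)$ and of the auxiliary class of minimum-first forests, and to verify that it matches the algebraic equation satisfied by the paths below $y=3x$; but the bijection is more transparent and explains the diagonal steps directly.
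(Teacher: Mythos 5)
Your first reduction (via Proposition~\ref{propPathBij} with $\ell=3$) is correct, and your analysis of the leaf-order freedom is largely sound: in a valid 213-avoiding forest no earlier entry can lie strictly between a root and its larger leaf, so each shrub can independently be made decreasing exactly when its larger leaf exceeds every later entry, giving $2^d$ orderings per ``skeleton.'' The fatal gap is the claim that these skeletons \emph{are precisely the data of $132$-avoiding forests}. That is false in both directions, already at $n=2$. The permutation $126345$ avoids $213$ (as do all four leaf-orderings of that skeleton), so roots $(1,3)$ with leaf-sets $(\{2,6\},\{4,5\})$ form a valid 213-skeleton; but its increasing ordering $126345$ contains the $132$ pattern $2\,6\,3$, so it is not the skeleton of any $132$-avoiding forest. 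Conversely, $234156$ is a $132$-avoiding forest, yet its skeleton (roots $(2,1)$, leaf-sets $(\{3,4\},\{5,6\})$) admits \emph{no} $213$-avoiding ordering at all, since $3,1,5$ is a $213$ pattern whatever the leaf orders. So your map is neither well defined nor surjective, and the count it produces is wrong: summing $2^d$ over the four $132$-avoiding forests of size $2$ gives $2+2+2+4=10$, whereas $\left|\mathcal{S}^2_2(213)\right|=14$ (the four genuine 213-skeletons contribute $2+4+4+4=14$). The two families are equinumerous but not skeleton-for-skeleton identical, so Theorem~\ref{T:av132} cannot be borrowed; a bijection between valid 213-skeletons and NE paths under $y=3x$ must be built from scratch, and that is exactly the content your proposal leaves unproven.

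A secondary inaccuracy: merging an East step with the North step immediately following it into a $(1,1)$ step never violates $y\le 3x$ (the line has slope $3>1$, so the diagonal chord of an EN corner stays weakly below it whenever its endpoints do), so ``vertical slack'' is not the criterion; what matters is adjacency, i.e.\ which E steps are immediately followed by an N, and you would still have to prove that under your (missing) skeleton bijection the dominating shrubs correspond exactly to such EN factors. For comparison, the paper proves Theorem~\ref{T:av213paths} by a direct recursive bijection with the $(1,3),(2,2),(1,-1)$ paths themselves: up-steps are cut into unit-height segments which are labelled recursively (the rightmost lowest unlabelled up-segment receives the smallest unused label, with smaller labels sent to its right), so that $(1,3)$ steps become increasing-leaf shrubs, $(2,2)$ steps become decreasing-leaf shrubs --- your diagonal steps in disguise --- and runs of $(1,-1)$ steps encode the relative positions of the roots; no detour through $132$-avoiders is needed.
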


\begin{proof}
First we give a correspondence from lattice paths to 213-avoiding permutations.

Begin with a lattice path from $(0,0)$ to $(4n,0)$ that contains only $(1,3)$, $(2,2)$, and $(1,-1)$ steps and stays weakly above the $x$-axis.  Partition the path into \emph{segments} that traverse exactly one unit in the vertical direction.  (Each $(2,2)$ step will be partitioned into 2 segments and each $(1,3)$ step will be partitioned into 3 segments.)  We label each segment of a $(1,3)$ or $(2,2)$ step as well as the midpoint of each $(2,2)$ step with a distinct label from $\{1,2, \dots, 3n\}$ in the following way:  Locate the lowest line $y=i$ from which an unlabeled up-segment begins.  Find the rightmost such segment $s$ beginning at $y=i$, and give $s$ the lowest unused label.  Let $j$ be the number of segments and midpoints to the right of $s$.  We now apply the $j$ smallest unused labels to the subpath to the right of $s$ and the remaining (larger) unused labels to the subpath to the left of $s$, and repeat this construction recursively.  In the case that a subpath has no up-segments, it must consist of a single midpoint of a $(2,2)$-step and so it receives the only unused label reserved for the subpath.  Now, read the labels from left to right to obtain a permutation $\pi$.

Notice that $\pi$ can be the labels of a binary shrub forest since the three labels on each increasing $(1,3)$ step or $(2,2)$ step correspond to the labels of a heap.  By construction, the labels on a $(1,3)$ step form a 123 pattern while the labels on a $(2,2)$ step form a 132 pattern.  Further, $\pi$ avoids 213 since for each digit $i$, all labels larger than and left of $i$ are greater than all labels larger than and right of $i$.

Next, we show how to reverse this correspondence, by giving a map from 213-avoiding permutations to lattice paths.  Start with a binary shrub forest $f$. Consider $f$ one heap at a time.  A heap with an increasing pair of leaves corresponds to a $(1,3)$ step and a heap with a decreasing pair of leaves corresponds to a $(2,2)$ step.  The $(1,3)$ and $(2,2)$ steps can be separated by $(1,-1)$ steps. To determine the placement of the $(1,-1)$ steps, we look at the roots of the heaps.

Take $\pi$ and mark any digit that is the first digit of a pair of leaves with decreasing labels.  Now, for each root, count the number of unmarked digits (roots and unmarked leaves) before the root and larger than the root.  This is the number of $(1,-1)$ steps that immediately precede the increasing step corresponding to that heap.  Mark the digits that were just used and repeat. Finally, end the path by adding $(1,-1)$ steps to return to the $x$-axis.
\end{proof}

As an example, consider the path in Figure~\ref{av213}.  Here, $(2,2)$ steps are shown with doubled lines to make them clearly distinct from $(1,3)$ steps.  For the map from $\pi$ to the path, the heaps 7 15 14, 11 13 12, and 2 4 3 have decreasing leaves, so each of them correspond to a $(2,2)$ step.  The heaps 8 9 10 and 1 5 6 have increasing leaves, so each of them correspond to a $(1,3)$ step.  The increasing steps in the path alternate between $(2,2)$ steps and $(1,3)$ steps.  Now, we mark the digits 15, 13, and 4 in $\pi$.  For the root 8, we see that 14 is unmarked and prior to 8, so we put one $(1,-1)$ step between the first two upsteps and mark 14.  For the root 11, we see no numbers larger than 11 and prior to 11 that have not yet been used, so we put zero $(1,-1)$ steps between the second and third upsteps.  For the root 1, we see 12, 11, 10, 9, 8, and 7 larger than 1 and prior to 1 that have not yet been used, so we put six $(1,-1)$ steps between the third and fourth upsteps and mark 12, 11, 10, 9, 8, and 7.  For the root 2, we see 5 and 6 larger than 2 and prior to 2 that have not yet been used, so we put two $(1,-1)$ steps between the fourth and fifth upsteps and mark 5 and 6.  Finally, we must take three $(1,-1)$ steps at the end of the path to return to the $x$-axis.

\begin{figure}[hbt]
\begin{center}
\raisebox{-0.5\height}{\scalebox{0.6}{
\begin{tikzpicture}
\draw[help lines] (0,0) grid (20,6);
%\draw[double,ultra thick] (0,0)--(2,2) node [above left, pos=0.25] {a} node [above left, pos=0.75] {b} node [above left, pos=0.5] {m};
%\draw[ultra thick] (2,2)--(3,1) node [below left, pos=0.5] {b};
%\draw[black,ultra thick] (3,1)--(4,4) node [above left, pos=0.17] {c} node [above left, pos=0.5] {d} node [above left, pos=0.83] {e};
%\draw[double,ultra thick] (4,4)--(6,6) node [above left, pos=0.25] {f} node [above left, pos=0.75] {g} node [above left, pos=0.5] {n};
%\draw[ultra thick] (6,6)--(7,5) node [below left, pos=0.5] {g};
%\draw[ultra thick] (7,5)--(8,4) node [below left, pos=0.5] {f};
%\draw[ultra thick] (8,4)--(9,3) node [below left, pos=0.5] {e};
%\draw[ultra thick] (9,3)--(10,2) node [below left, pos=0.5] {d};
%\draw[ultra thick] (10,2)--(11,1) node [below left, pos=0.5] {c};
%\draw[ultra thick] (11,1)--(12,0) node [below left, pos=0.5] {a};
%\draw[black,ultra thick] (12,0)--(13,3) node [above left, pos=0.17] {h} node [above left, pos=0.5] {i} node [above left, pos=0.83] {j};
%\draw[ultra thick] (13,3)--(14,2) node [below left, pos=0.5] {j};
%\draw[ultra thick] (14,2)--(15,1) node [below left, pos=0.5] {i};
%\draw[double,ultra thick] (15,1)--(17,3) node [above left, pos=0.25] {k} node [above left, pos=0.75] {l}  node [above left, pos=0.5] {o};
%\draw[ultra thick] (17,3)--(18,2) node [below left, pos=0.5] {l};
%\draw[ultra thick] (18,2)--(19,1) node [below left, pos=0.5] {k};
%\draw[ultra thick] (19,1)--(20,0) node [below left, pos=0.5] {h};
\draw[double,ultra thick] (0,0)--(2,2) node [above left, pos=0.25] {7} node [above left, pos=0.75] {14} node [above left, pos=0.5] {15};
\draw[ultra thick] (2,2)--(3,1) node [below left, pos=0.5] {};
\draw[black,ultra thick] (3,1)--(4,4) node [above left, pos=0.17] {8} node [above left, pos=0.5] {9} node [above left, pos=0.83] {10};
\draw[double,ultra thick] (4,4)--(6,6) node [above left, pos=0.25] {11} node [above left, pos=0.75] {12} node [above left, pos=0.5] {13};
\draw[ultra thick] (6,6)--(7,5) node [below left, pos=0.5] {};
\draw[ultra thick] (7,5)--(8,4) node [below left, pos=0.5] {};
\draw[ultra thick] (8,4)--(9,3) node [below left, pos=0.5] {};
\draw[ultra thick] (9,3)--(10,2) node [below left, pos=0.5] {};
\draw[ultra thick] (10,2)--(11,1) node [below left, pos=0.5] {};
\draw[ultra thick] (11,1)--(12,0) node [below left, pos=0.5] {};
\draw[black,ultra thick] (12,0)--(13,3) node [above left, pos=0.17] {1} node [above left, pos=0.5] {5} node [above left, pos=0.83] {6};
\draw[ultra thick] (13,3)--(14,2) node [below left, pos=0.5] {};
\draw[ultra thick] (14,2)--(15,1) node [below left, pos=0.5] {};
\draw[double,ultra thick] (15,1)--(17,3) node [above left, pos=0.25] {2} node [above left, pos=0.75] {3}  node [above left, pos=0.5] {4};
\draw[ultra thick] (17,3)--(18,2) node [below left, pos=0.5] {};
\draw[ultra thick] (18,2)--(19,1) node [below left, pos=0.5] {};
\draw[ultra thick] (19,1)--(20,0) node [below left, pos=0.5] {};
\end{tikzpicture}}}\\
%$\alpha = $amb cde fng hij kol\\
%$\beta = $hkl oij acd efg nbm\\
$\pi = $\underline{7 15 14} \underline{8 9 10} \underline{11 13 12} \underline{1 5 6} \underline{2 4 3}

\end{center}
\caption{A lattice path and its corresponding 213-avoiding permutation}
\label{av213}
\end{figure}
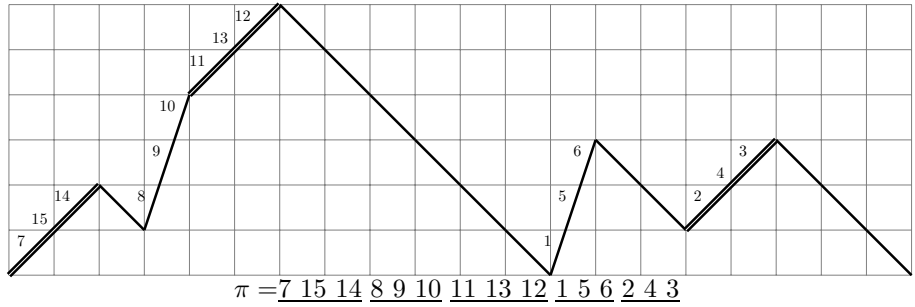

%\addtext{
We now show that 213-avoiding forests are equinumerous to those avoiding 312.%}

\begin{theorem}
\label{T:av213eq312}
$\left|\mathcal{S}^2_n(213)\right|  = \left|\mathcal{S}^2_n(312)\right|$.
\end{theorem}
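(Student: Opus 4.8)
The plan is to build the bijection around the one classical symmetry that interchanges the two patterns, namely reversal, and then to repair the damage reversal does to the shrub constraint. Writing a forest's permutation in consecutive blocks $B_1,\dots,B_n$ with $B_i=(r_i,a_i,b_i)$ and $r_i=\min B_i$, the full reversal $R(\pi)=\pi_{3n}\pi_{3n-1}\cdots\pi_1$ sends $213$-avoiders to $312$-avoiders, and it carries $\mathcal S^2_n(213)$ bijectively onto the class $\mathcal T_n$ of $312$-avoiding permutations each of whose blocks has its minimum placed \emph{last}. It therefore suffices to exhibit a bijection $\mathcal T_n\to\mathcal S^2_n(312)$ that moves every block minimum back to the front while neither creating nor destroying an occurrence of $312$.

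To organize this, I would first record the two structural decompositions that both classes satisfy, each obtained by locating the value $1$. In any $\pi\in\mathcal S^2_n(213)$ the entry $1$ is forced to be a root, say of block $B_p$, and avoidance of $213$ forces every label in $B_1,\dots,B_{p-1}$ to exceed every label occurring to the right of $1$; dually, in any $\sigma\in\mathcal S^2_n(312)$ the labels preceding $1$ are all smaller than those following it. Hence both classes split as (a sub-forest supported on one extreme of the value range) $\cdot$ (the pivot block containing $1$) $\cdot$ (a sub-forest on the other extreme), the sole difference being which extreme the left factor occupies. Since complementing values interchanges the two extremes, the recursion suggests itself: peel off the pivot block, complement the left factor into a right factor (and vice versa), and recurse on the two smaller forests. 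Carrying an orientation/complement flag through the recursion should convert a $213$-decomposition into a $312$-decomposition block by block, and invertibly so.

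The main obstacle — and the reason neither reversal, nor leaf-swapping, nor rotating each block's minimum to the front works by itself — is the handling of the pivot block's two leaves and their interaction with the adjacent sub-forest. A single local move is not pattern-safe: for instance the $312$-free block sequence $(4,5,3)(2,6,1)$, i.e.\ the permutation $453261$, becomes $345126$ after rotating each minimum to the front, and $345126$ now contains $312$ (as $3,1,2$). Thus the crux of the proof is to define the leaf placement so that the order in which the two leaves are read encodes exactly the avoidance constraint transmitted across the pivot, and to verify that avoidance is preserved in both directions. I expect this leaf bookkeeping, rather than the gross decomposition, to absorb most of the work.

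As a cleaner alternative I would keep Theorem~\ref{T:av213paths} in reserve: rather than mapping $\mathcal S^2_n(213)$ to $\mathcal S^2_n(312)$ directly, one can instead construct a second bijection from $\mathcal S^2_n(312)$ onto the \emph{same} family of lattice paths (steps $(1,3)$, $(2,2)$, $(1,-1)$ from $(0,0)$ to $(4n,0)$, weakly above the $x$-axis) used there for $213$, mirroring that construction with the roles of the increasing leaf-pairs $(1,3)$ and decreasing leaf-pairs $(2,2)$ interchanged and the labelling rule adjusted accordingly; composing with the inverse of the $213$-to-paths bijection then yields the claimed equinumerosity. This route localizes the same leaf-order obstacle into the step-labelling step, where it can be checked directly against the argument already given for $213$.
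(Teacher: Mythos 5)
Your fallback route is in fact the paper's actual proof strategy --- the paper never maps $\mathcal{S}^2_n(213)$ to $\mathcal{S}^2_n(312)$ directly, but shows that each class is in bijection with the \emph{same} family of lattice paths with steps $(1,3)$, $(2,2)$, $(1,-1)$ --- but the one concrete instruction you give for carrying it out is wrong. The heart of the paper's argument is a counting fact verified separately in each class: appending a heap with increasing leaves (a $123$ heap) increases the number of admissible label sets for the next heap by exactly $3$, and appending a heap with decreasing leaves (a $132$ heap) increases it by exactly $2$, in \emph{both} the $213$-avoiding and the $312$-avoiding case, even though the underlying structure differs (for $213$ the new heap's labels must be consecutive values; for $312$ the two leaves must be new maxima while the root may be any value that is not the top of an inversion). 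Because these increments agree, the \emph{same} assignment $123\mapsto(1,3)$, $132\mapsto(2,2)$ is used for both classes, with $d$ downsteps selecting the $(d+1)$st highest admissible root. Interchanging the roles of $(1,3)$ and $(2,2)$ on the $312$ side, as you propose, breaks the correspondence. Concretely: there are $4$ forests in $\mathcal{S}^2_2(312)$ whose two heaps are both $123$ patterns (starting from $123$, the appended root may be any of $1,2,3,4$, giving $123456$, $124356$, $134256$, $234156$), but only $3$ paths from $(0,0)$ to $(8,0)$ using two $(2,2)$ upsteps, since at most two downsteps can follow the first $(2,2)$ step. So the ``labelling rule adjusted accordingly'' that you defer cannot exist for the interchanged encoding; discovering that no interchange is needed is precisely the substance of the paper's proof, and it is absent from the proposal.

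Your primary route (reversal, then a recursive repair around the position of $1$) is also not a proof as written. The structural facts you state are correct --- $1$ must be a root in both classes, and in a $213$-avoider everything left of $1$ exceeds everything right of it, while in a $312$-avoider the inequality is reversed --- and your example $453261\mapsto 345126$ rightly shows that naive local fixes fail. But you explicitly leave open the pivot-block leaf bookkeeping that you yourself identify as the crux, so this is a plan with a hole where the proof should be. A primary route whose acknowledged hardest step is deferred, together with a fallback whose key prescription is incorrect, leaves the theorem unproved.
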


\begin{proof}
Consider a forest avoiding 213 as it is built from left to right.  According to our correspondence a $(2,2)$ upstep indicates that the next heap forms a 132 pattern while a $(1,3)$ upstep indicates that the next heap in the forest forms a 123 pattern.  The $(1,-1)$ steps between the upsteps indicate the labels of the next in the following way:

%Suppose that the first heap in our forest forms a 123 pattern and we wish to append a single new heap.  All labels that are used on the new heap must be labelled from a set of consecutive numbers.  In particular, this set may contain values which are all larger than the values of the first heap (resulting in the first heap using labels 123), all between the two largest digits (resulting in the first heap using labels 126), all between the two smallest digits (resulting in the first heap using labels 156), or all smaller than the labels of the first heap (resulting in the first heap using labels 456).  In other words, there are 4 choices in how the new labels interleave with previous labels, and accordingly there is a choice of 0, 1, 2, or 3 $(1,-1)$ steps after a $(1,3)$ upstep.

Suppose that the first $k$ upsteps in our path encode a forest with labels $\{1,2,\dots, 3k\}$ where the last heap forms a 123 pattern with labels $\ell-2, \ell-1, \ell$.  We wish to append a single new heap to this forest.  In order for the shrub forest to avoid 213, all labels on the new heap must be consecutive.  In particular, the labels on the new heap may be $i+1, i+2, i+3$ for $0 \leq i \leq \ell$ (and all labels larger than $i$ on the original forest are incremented by 3). Let $d$ be the number of downsteps immediately after the $(1,3)$ upstep corresponding to the last 123 pattern; then $\ell-d=i$ indicates the labels on the next heap to be appended are $i+1$, $i+2$, and $i+3$.

%Similarly, suppose that the first heap in our forest forms a 132 pattern and we wish to append a single new heap.  All labels that are used on the new heap must again be taken from a set of $3$ consecutive numbers, and they may not be larger than the labels of the leaves on our first heap, lest we form a forbidden pattern.  There are 3 choices for what labels to use on the new heap, and accordingly there is a choice of 0, 1, or 2 $(1,-1)$ steps after a $(2,2)$ upstep.

Similarly, suppose that the first $k$ upsteps in our path encode a forest with labels $\{1,2,\dots, 3k\}$ where the last heap forms a 132 pattern with labels $\ell-2, \ell-1, \ell$.  We wish to append a single new heap to this forest.  In order for the shrub forest to avoid 213, all labels on the new heap must be consecutive.  In particular, the labels on the new heap may be $i+1, i+2, i+3$ for $0 \leq i \leq \ell-1$ (and all labels larger than $i$ on the original forest are incremented by 3). Let $d$ be the number of downsteps immediately after the $(2,2)$ upstep corresponding to the last 132 pattern; then $\ell-d-1=i$ indicates the labels on the next heap to be appended are $i+1$, $i+2$, and $i+3$.

In general, when we append a new 123 heap to the end of a 213-avoiding forest, we increase the number of sets of labels we can use on the next heap by 3, and when we append a new 132 heap to the end of a 213-avoiding forest, we increase the number of sets of labels we can use on the next heap by 2.  The number $k$ of $(1,-1)$ downsteps tells us to use the $(k+1)$st highest possible value for the root of the next appended heap.

We can construct 312-avoiding forests in a similar way, encoding 123 heaps with $(1,3)$ steps and 132 heaps with $(2,2)$ steps.

%Suppose that the first heap in our forest forms a 123 pattern and we wish to append a new heap.  All labels that are used on leaves of the new heap must be larger than all existing labels, but the root can appear lower.  In particular, the root may be larger than the values of the first heap (resulting in the first heap using labels 123), between the two largest digits (resulting in the first heap using labels 124), between the two smallest digits (resulting in the first heap using labels 134), or smaller than the labels of the first heap (resulting in the first heap using labels 234).  In other words, there are 4 choices in how the new root interleaves with previous labels, and accordingly there is a choice of 0, 1, 2, or 3 $(1,-1)$ steps after a $(1,3)$ upstep.

Suppose that the first $k$ upsteps in our path encode a forest with labels $\{1,2,\dots, 3k\}$ where the last heap forms a 123 pattern with labels $\ell-2, \ell-1, \ell$.  We wish to append a single new heap to this forest.  In order for the shrub forest to avoid 312, all labels that are used on leaves of the new heap must be larger than all existing labels, but the root can appear lower. In particular, the labels on the new heap may be $i, 3k+2, 3k+3$ for $1 \leq i \leq 3k+1$ (and if $i \leq 3k$ then $i$ may not be the larger digit in an inversion in the original forest on $\{1,2,\dots , 3k\}$). Then, all labels larger than $i-1$ on the original forest are incremented by 1 to obtain the new forest. Let $d$ be the number of downsteps immediately after the $(1,3)$ upstep corresponding to the last 123 pattern; then $\ell-d=i$ indicates the labels on the next heap to be appended are $i$, $3k+2$, and $3k+3$.

%Similarly, suppose that the first heap in our forest forms a 132 pattern and we wish to append a new heap.  All labels that are used on leaves of the new heap must still be larger than all existing labels, but the root may not appear between the largest two digits now. There are 3 choices for what labels to use on the new heap, and accordingly there is a choice of 0, 1, or 2 $(1,-1)$ steps after a $(2,2)$ upstep.

Similarly, suppose that the first $k$ upsteps in our path encode a forest with labels $\{1,2,\dots, 3k\}$ where the last heap forms a 132 pattern with labels $\ell-2, \ell-1, \ell$.  We wish to append a single new heap to this forest.  In order for the shrub forest to avoid 312, all labels that are used on leaves of the new heap must be larger than all existing labels, and the root can appear lower.  In particular, the labels on the new heap may be $i, 3k+2, 3k+3$ for $1 \leq i \leq 3k+1$ (and if $i \leq 3k$ then $i$ may not be the larger digit in an inversion in the original forest on $\{1,2,\dots , 3k\}$, so we know $i \neq 3k$ in this case).

As before, when we append a new 123 heap to the end of a 312-avoiding forest, we increase the number of sets of labels we can use on the next heap by 3 (since $i \in \{3k-1, 3k, 3k+1\}$ are new), and when we append a new 132 heap to the end of a 312-avoiding forest, we increase the number of sets of labels we can use on the next heap by 2 (since $i \in \{3k-1, 3k+1\}$ are new).  The number $k$ of $(1,-1)$ downsteps tells us to use the $(k+1)$st highest possible value for the root of the next appended heap.

Since both 213 and 312-avoiding forests are in bijection with the same set of lattice paths using $(1,3)$, $(2,2)$, and $(1,-1)$ steps, the two sets of forests are equinumerous.
\end{proof}

For example, the path in Figure \ref{av213} corresponds to the 213-avoiding forest \underline{7 15 14} \underline{8 9 10} \underline{11 13 12} \underline{1 5 6} \underline{2 4 3}.  Earlier, we built this forest by labeling up-segments in the path, but we can also follow the argument of Theorem \ref{T:av213eq312} for an alternate construction.  The first upstep is a $(2,2)$ step, so we begin with the forest \underline{1 3 2}.  Next, we have $d=1$ downsteps, so $i=\ell-d-1=3-1-1=1$.  Our next tree has labels 2, 3, and 4, and since the next upstep is a $(1,3)$ step, we have \underline{1 6 5} \underline{2 3 4}.  Next, we have $d=0$ downsteps, so $i=\ell-d = 4-0=4$.  Our next tree has labels 5, 6, and 7, and since the next upstep is a $(2,2)$ step, we have \underline{1 9 8} \underline{2 3 4} \underline{5 7 6}.  Next, we have $d=6$ downsteps, so $i=\ell-d-1 = 7-6-1=0$.  Our next tree has labels 1, 2, and 3, and since the next upstep is a $(1,3)$ step, we have \underline{4 12 11} \underline{5 6 7} \underline{8 10 9} \underline{1 2 3}.  Next, we have $d=2$ downsteps, so $i=\ell-d = 3-2=1$.  Our next tree has labels 2, 3, and 4, and since the next upstep is a $(2,2)$ step, we have \underline{7 15 14} \underline{8 9 10} \underline{11 13 12} \underline{1 5 6} \underline{2 4 3}.

Similarly, we can construct a 312-avoiding forest from the same path working from left to right.  The first upstep is a $(2,2)$ step, so we begin with the forest \underline{1 3 2}.  Next, we have $d=1$ downsteps, and the possible values of the next root are 1, 2, or 4.  We choose the 2nd highest value.  Our next tree has labels 2, 5, and 6, and since the next upstep is a $(1,3)$ step, we have \underline{1 4 3} \underline{2 5 6}.  Next, we have $d=0$ downsteps, and the possible values of the next root are 1, 2, 5, 6, or 7.  We choose the highest value.  Our next tree has labels 7, 8, and 9, and since the next upstep is a $(2,2)$ step, we have \underline{1 4 3} \underline{2 5 6} \underline{7 9 8}.  Next, we have $d=6$ downsteps, and the possible values of the next root are 1, 2, 5, 6, 7, 8, or 10.  We choose the 7th highest value.  Our next tree has labels 1, 11, and 12, and since the next upstep is a $(1,3)$ step, we have \underline{2 5 4} \underline{3 6 7} \underline{8 10 9} \underline{1 11 12}.  Next, we have $d=2$ downsteps, and the possible values of the next root are 1, 11, 12, or 13.  We choose the 3rd highest value.  Our next tree has labels 11, 14, and 15, and since the next upstep is a $(2,2)$ step, we have \underline{2 5 4} \underline{3 6 7} \underline{8 10 9} \underline{1 12 13} \underline{11 15 14}.

%\addtext{
By combining our results, we can enumerate forests avoiding either 213 or 312.%}

%\addtext{
\begin{theorem}%\note{Added a new theorem, giving the exact enumeration.}
\label{T:av213and312}
$\left|\mathcal{S}^2_n(213)\right|  = \left|\mathcal{S}^2_n(312)\right| = \dfrac{1}{3 m+1}\displaystyle\sum\limits_{v=0}^{m}\dbinom{3 m+1}{m-v}\dbinom{3 m+v}{v}$.
\end{theorem}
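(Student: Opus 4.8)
The plan is to combine the two preceding theorems with the path transformation of Proposition~\ref{propPathBij} in order to reduce everything to a count that Proposition~\ref{propSchroder} evaluates in closed form. By Theorem~\ref{T:av213paths} we already know $\left|\mathcal{S}^2_n(213)\right| = a_n$, where $a_n$ is the number of lattice paths from $(0,0)$ to $(4n,0)$ with steps $(1,3)$, $(2,2)$, and $(1,-1)$ staying weakly above the $x$-axis, and by Theorem~\ref{T:av213eq312} the set $\mathcal{S}^2_n(312)$ has the same cardinality. So the only thing left to establish is that $a_n$ equals the stated double sum.

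The key observation is that the step set $\{(1,3),(2,2),(1,-1)\}$ is precisely the image of the Schröder step set $S=\{(0,1),(1,0),(1,1)\}$ under the affine map of Proposition~\ref{propPathBij} with $\ell=3$. Indeed, the map $(x,y)\mapsto(x+y,\,3x-y)$ sends $(0,1)\mapsto(1,-1)$, $(1,0)\mapsto(1,3)$, and $(1,1)\mapsto(2,2)$; it also sends the terminal point $(n,3n)$ to $((\ell+1)n,0)=(4n,0)$ and carries the region weakly below $y=3x$ onto the region weakly above the $x$-axis. Thus, taking $m=n$, Proposition~\ref{propPathBij} supplies a bijection between the paths counted by $a_n$ and the lattice paths from $(0,0)$ to $(n,3n)$ with steps $(0,1)$, $(1,0)$, $(1,1)$ remaining weakly below $y=3x$.

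Finally, Proposition~\ref{propSchroder} with $\ell=3$ and $m=n$ counts exactly those Schröder paths, yielding
$$a_n=\frac{1}{3n+1}\sum_{v=0}^{n}\binom{3n+1}{n-v}\binom{3n+v}{v},$$
which is the claimed formula. Combining this with Theorems~\ref{T:av213paths} and~\ref{T:av213eq312} gives $\left|\mathcal{S}^2_n(213)\right|=\left|\mathcal{S}^2_n(312)\right|$ equal to the stated sum (here $m=n$, the number of heaps).

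Since every step after the step-set correspondence is a direct invocation of a result already in hand, there is no substantial obstacle in this proof. The single point requiring care is verifying that the affine map of Proposition~\ref{propPathBij} carries the Schröder steps exactly onto $\{(1,3),(2,2),(1,-1)\}$ and respects both the endpoint and the boundary; the computation above confirms this, and it is this matching that ties the three cited results together.
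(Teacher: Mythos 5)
Your proof is correct and follows exactly the same route as the paper, which simply cites Theorems~\ref{T:av213eq312} and~\ref{T:av213paths} together with Propositions~\ref{propPathBij} and~\ref{propSchroder} with $\ell=3$. You additionally verify explicitly that the affine map $(x,y)\mapsto(x+y,\,3x-y)$ carries the Schr\"oder step set onto $\{(1,3),(2,2),(1,-1)\}$ and matches endpoints and boundary, a detail the paper leaves implicit.
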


\begin{proof}
The result follows directly from Theorems~\ref{T:av213eq312} and~\ref{T:av213paths}, and Propositions~\ref{propPathBij} and~\ref{propSchroder} with $\ell=3$.
\end{proof}
%} % \addtext

The bijection presented in Theorem~\ref{T:av213eq312} can be generalized to $k$-ary shrub forests, for which we provide an outline in the interest of length. For example, the number of ternary shrubs avoiding 213 is in bijection with lattice paths using the steps $(1,-1)$, $(1,4)$ (corresponding to 1234), red $(2,3)$ (corresponding to 1243), red $(3,2)$ (corresponding to 1342), blue $(2,3)$ (corresponding to 1423), and blue $(3,2)$ (corresponding to 1432).  Notice that since we wish to use $(2,3)$ labels and $(3,2)$ labels for different kinds of heaps, we need colored labels.

For example, the path (red $(3,2)$) $(1,-1)$ (blue $(2,3)$) $(1,-1)$ $(1,-1)$ $(1,4)$ $(1,-1)^6$ corresponds to 1(11)(12)(10) 2978 3456.

In general, $k$-ary shrub forests avoiding either 213 or 312 are in bijection with lattice paths using $(1,-1)$ steps and $\frac{1}{k+1}{\binom{2k}{k}}$ steps of the form $(j,k-j+2)$ going from $(0,0)$ to $((k+2)n,0)$ and staying weakly above the $x$-axis.  To find the step corresponding to a particular $k$-ary shrub permutation $\pi$ let $i$ be the value of the smallest number that plays the role of a 2 in a 21-pattern, or $i=k+2$ if $\pi$ is the identity.  Then the corresponding step is $(k+2-(i-1),i-1)$.  This makes sense since if $i$ is the smallest number that plays the role of 2 in a 21 pattern, then all digits greater than or equal to $i$ must be larger than labels in new 213-avoiding shrubs appended to the end of the forest.  With $i-1$ smaller digits, there are $i$ sets of consecutive values that can be used on the labels of a newly appended shrub.  Using an upstep that takes $i-1$ vertical units allows $i$ choices for how many $(1,-1)$ steps may come after it.

\subsection{Avoiding 231}

%\addtext{
To enumerate $\mathcal{S}^2_n(231)$, we make use of the following celebrated result of Duchon concerning lattice paths bounded by $y=\frac{2}{3}x$.

\begin{proposition}[{Duchon~\cite[Theorem~11]{Duchon2000}}]\label{propDuchon}%\note{Added a new proposition, used in the proof of Theorem~\ref{T:av231}.}
The number of lattice paths of length $5n$ from the origin
to the line $y=\frac{2}{3}x$ with unit East and North steps that stay
weakly below the line is given by
$$\sum_{i=0}^n \frac{1}{5n+i+1}{\binom{5n+1}{n-i}\binom{5n+2i}{i}}.$$
\end{proposition}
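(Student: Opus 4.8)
The plan is to convert the two-dimensional boundary condition into a one-dimensional nonnegativity constraint and then attack the resulting walk problem with the kernel method, in the spirit of the analytic combinatorics promised in the abstract. A path counted here has length $5n$ and terminates on the line $y=\tfrac{2}{3}x$, so it ends at the lattice point $(3n,2n)$ and uses exactly $3n$ East and $2n$ North steps. For a lattice point $(a,b)$ on the path, lying weakly below $y=\tfrac{2}{3}x$ is equivalent to $h:=2a-3b\geq 0$. Reading the path step by step, an East step changes $h$ by $+2$ and a North step changes $h$ by $-3$, with $h=0$ at both endpoints. Thus the objects to be counted are exactly the nonnegative walks on $\mathbb{Z}_{\geq 0}$ that start and end at $0$, take $3n$ up-steps $(+2)$ and $2n$ down-steps $(-3)$, and never descend below $0$.

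First I would encode these walks by a bivariate generating function with a catalytic variable. Let $x$ and $y$ mark up- and down-steps and let $u$ record the current height, and set $W(u)=\sum x^{\#\mathrm{up}}y^{\#\mathrm{down}}u^{h}$ over all nonnegative walks from $0$. Appending an up-step is always legal, while a down-step is legal only from height $\geq 3$, which yields
\[
W(u)=1+xu^{2}W(u)+yu^{-3}\bigl(W(u)-w_0-w_1u-w_2u^2\bigr),
\]
where $w_j=[u^j]W(u)$ is the (as yet unknown) series counting walks that end at height $j$. Clearing $u^{-3}$ gives the kernel equation $W(u)\,K(u)=u^{3}-y(w_0+w_1u+w_2u^2)$ with $K(u)=u^{3}-xu^{5}-y$. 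As $x\to 0$ the kernel degenerates to $u^{3}=y$, so $K$ has exactly three small roots $u_1,u_2,u_3$ (Puiseux series vanishing with $y$) alongside two large roots. Since requiring $W$ to remain a power series forces the right-hand side to vanish at each small root, we obtain the three linear equations $u_k^{3}=y(w_0+w_1u_k+w_2u_k^{2})$ in the three unknowns $w_0,w_1,w_2$. Solving this Vandermonde-type system (by Cramer's rule, or by Lagrange interpolation through the $u_k$) expresses $w_0$ via the elementary symmetric functions of the small roots, and in particular shows that $w_0$ — hence the counting sequence, recovered as $[x^{3n}y^{2n}]w_0$ — is algebraic.

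It then remains to extract the explicit coefficient. Because the small root $u(x,y)$ satisfies the relation $u^{3}-xu^{5}=y$, it is a Fuss--Catalan-type series whose coefficients are governed by binomials of the form $\binom{5n+2i}{i}$, and I would apply Lagrange inversion to this relation to expand $w_0$. The two-fold sum in the statement should then appear naturally: the index $i$ records the internal expansion of the root $u(x,y)$ (producing $\binom{5n+2i}{i}$), while the outer ballot-type factor $\tfrac{1}{5n+i+1}\binom{5n+1}{n-i}$ arises from the cyclic/Lagrange correction enforcing the return to height $0$. As a cross-check one can compare against Bizley's exponential formula $\exp\bigl(\sum_{j\geq 1}\tfrac{1}{5j}\binom{5j}{2j}t^{j}\bigr)$ for the generating function of paths below a line of rational slope $\tfrac{q}{p}$ in the case $(p,q)=(3,2)$.

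The main obstacle is precisely this final extraction. The kernel $K$ is a quintic, so its small roots cannot be written in radicals, and the coefficient computation must be carried out symbolically on the algebraic relation $u^{3}-xu^{5}=y$ rather than on an explicit root. Carrying out the bivariate Lagrange inversion while tracking the summation index $i$, and then matching the outcome term-by-term to the stated double sum, is the genuinely technical step; the reduction to nonnegative walks and the kernel set-up are routine by comparison.
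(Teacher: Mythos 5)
The paper itself offers no proof of Proposition~\ref{propDuchon}; it is quoted as an external result of Duchon~\cite{Duchon2000} (the kernel-method perspective you adopt is the one of Banderier and Flajolet~\cite{BF2002}, also cited). So your attempt must stand on its own, and on its own it has a genuine gap. The parts you do carry out are correct: a path of length $5n$ ending on the line has $3n$ East and $2n$ North steps; recording $h=2a-3b$ converts the boundary condition into nonnegativity of a walk with steps $+2$ and $-3$ (this is exactly the affine correspondence of Proposition~\ref{propPathBij}); the functional equation $W(u)\,(u^{3}-xu^{5}-y)=u^{3}-y(w_0+w_1u+w_2u^{2})$ is right; and the three-small-roots kernel argument does prove algebraicity of $w_0$. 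But the proposition is not the statement that the counting sequence is algebraic --- it is the explicit double sum, and that is precisely what you never derive. Your argument stops at ``I would apply Lagrange inversion \ldots the two-fold sum should then appear naturally,'' and you concede that matching the output to the stated sum is the genuinely technical step. That deferred step is the entire content of the theorem, so what you have is a plan, not a proof.

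Moreover, the route you sketch for that step does not go through as described. The kernel method gives $y\,w_0=u_1u_2u_3$, the product of the \emph{three} small branches, which are conjugate Puiseux series in $y^{1/3}$; there is no single ``small root $u(x,y)$'' of $u^{3}-xu^{5}=y$ to which one-variable Lagrange inversion can be applied to produce $w_0$. Extracting $[x^{3n}y^{2n}]$ from a symmetric function of conjugate branches requires either a multi-branch inversion with careful bookkeeping of cube roots of unity, or a Bizley-type cycle-lemma argument --- and the latter naturally yields the exponential form $\exp\bigl(\sum_{j\geq 1}\tfrac{1}{5j}\binom{5j}{2j}t^{j}\bigr)$ that you cite as a ``cross-check,'' not Duchon's double sum; converting one into the other is itself a nontrivial identity. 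Likewise, the claim that $\tfrac{1}{5n+i+1}\binom{5n+1}{n-i}$ arises as a ``cyclic/Lagrange correction'' is asserted, not computed. To close the gap you would need to carry out the branch computation in full (tracking how the index $i$ actually arises), or else abandon the kernel route for Duchon's own combinatorial decomposition, which is where this formula genuinely comes from.
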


%\note{Replaced OEIS with primary references.}
This sequence was investigated further by Banderier and Flajolet~\cite{BF2002} under a slightly different formulation, whose equivalence follows from Proposition~\ref{propPathBij}. This alternative perspective is known as ``Duchon's club model'':
\emph{People arrive at a club by pairs and leave in threesomes. What is
the number of possible scenarios from the club opening until it closes?}

We enumerate forests avoiding 231 by exhibiting a bijection with Duchon's club paths.
%} % \addtext

\begin{theorem}\label{T:av231}%\note{I've combined the two original theorems in this subsection.}
The equality $\left|\mathcal{S}^2_n(231)\right| =\displaystyle\sum\limits_{i=0}^n \dfrac{1}{5n+i+1}{\dbinom{5n+1}{n-i}\dbinom{5n+2i}{i}}$ holds.
\end{theorem}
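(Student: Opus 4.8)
The plan is to prove the identity by exhibiting an explicit bijection between $\mathcal{S}^2_n(231)$ and the Duchon's club paths counted in Proposition~\ref{propDuchon}, namely the NE lattice paths from $(0,0)$ to $(3n,2n)$ (that is, with $3n$ East and $2n$ North steps) that remain weakly below the line $y=\tfrac23 x$. Once such a bijection is in hand, Proposition~\ref{propDuchon} gives the stated count immediately; alternatively, using Proposition~\ref{propPathBij} with $\ell=\tfrac23$, one may equivalently target the ``arrive in pairs, depart in threes'' club paths that run from $(0,0)$ to $(5n,0)$ weakly above the $x$-axis with up-steps of vertical rise $+2$ and down-steps of rise $-3$. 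I would work with whichever of these two equivalent path families makes the combinatorics most transparent; the above-axis version fits the ``append one heap at a time'' paradigm already used for $213$ and $312$ in Theorem~\ref{T:av213eq312}, so I would lean on that formulation.

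First I would establish a structural lemma describing $231$-avoiding binary shrub forests, playing the role that Lemmas~\ref{L:DecLabels} and~\ref{L:av132} played for $123$ and $132$. Writing $\pi_F=r_1a_1b_1\cdots r_na_nb_n$, the heap condition forces $r_i<a_i,b_i$, and $231$-avoidance is governed by the classical decomposition $\pi_F=L\,(3n)\,R$ in which the maximum is a leaf, every entry of $L$ is smaller than every entry of $R$, and $L,R$ individually avoid $231$. The key observation I would extract is that, when the forest is grown one heap at a time from the left, $231$-avoidance forces the three labels entering with each new heap to occupy a block of \emph{consecutive} available values (exactly as in the proof of Theorem~\ref{T:av213eq312}), so that each new heap is specified by just two data: the orientation of its two leaves (increasing, giving a $123$-type heap, or decreasing, giving a $132$-type heap) and the rank of its root among the currently admissible slots.

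With this in place I would define the map by reading the heaps left to right and emitting, for each heap, an up-move that records its leaf-orientation together with a run of down-moves whose length records the chosen root rank; the global count of $3n$ East against $2n$ North steps (five steps per heap) is exactly what the two orientations and the root-ranks produce. The crux of the argument --- and the step I expect to be the main obstacle --- is verifying that the resulting path stays weakly below $y=\tfrac23 x$ \emph{if and only if} the labelling yields a genuine $231$-avoiding shrub forest: the heap inequalities ($r_i$ below both of its leaves) and the $231$-avoidance constraint must be shown to translate precisely into the prefix inequality $\#N\le\tfrac23\,\#E$, with no slack in either direction. I would then check invertibility by reconstructing the forest from a path (decoding each up-move into a heap orientation and each maximal run of down-moves into a root placement), so that the two families are in bijection; the enumeration then follows from Proposition~\ref{propDuchon}, or equivalently from Proposition~\ref{propPathBij} together with the Banderier--Flajolet club-model form.
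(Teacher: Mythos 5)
Your plan rests on a structural lemma that is false. It is not true that $231$-avoidance forces the labels of each newly appended heap to form a block of consecutive values ``exactly as in the proof of Theorem~\ref{T:av213eq312}'': the permutation $\pi = 1\,6\,4\,2\,3\,5$ lies in $\mathcal{S}^2_2(231)$ (both shrub conditions hold and it contains no $231$ pattern), yet its second heap uses the labels $\{2,3,5\}$, which are not consecutive. The consecutivity argument is specific to $213$: there, an old value $y$ wedged between two new values $x<y<z$ always yields $y\,x\,z \sim 213$, because in an appended heap the smallest new entry (the root) comes first. For $231$ the dangerous configuration is $y\,z\,x \sim 231$, which only occurs when a larger new value precedes a smaller one, i.e., between the two leaves of a $132$-type heap; a $123$-type heap may interleave freely with earlier values, subject only to the separate constraint that no new value may be smaller than the initial entry of an earlier ascending pair. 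So the data ``(leaf orientation, root rank)'' does not determine a $231$-avoiding forest, and the admissible extensions are not those of the $213$ analysis.

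This is not a repairable detail but a collapse of the counting scheme. If each heap were encoded by one orientation-bearing up-move plus a down-run recording a root rank, with the admissibility structure of Theorem~\ref{T:av213eq312}, you would reproduce the $213/312$ count ($2,14,134,\ldots$, A144097) rather than the $231$ count ($2,23,377,\ldots$, A060941); these already differ at $n=2$. The mismatch also shows in the path statistics: Duchon paths have $3n$ East and $2n$ North steps, so they cannot be cut into $n$ pieces each containing a single orientation-marked up-step --- that shape belongs to the $(1,3)$, $(2,2)$, $(1,-1)$ paths of Theorem~\ref{T:av213paths}. The paper's actual bijection is organized per \emph{leaf}, not per heap, and runs in the opposite direction: the path is decomposed into $2n$ blocks of the form $E^kN$, one block per leaf, the permutation is built from right to left by prepending entries (with bars tracking left-to-right maxima), and a root is inserted after every second block. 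A proof along your lines would need to abandon the consecutive-block lemma and identify the genuinely different invariant that $231$-avoidance imposes on appended heaps.
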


\begin{proof}
Let $\mathcal{A}_n$ be the set of 231-avoiding permutations realizable on a binary shrub forest containing $n$ heaps.  Let $\mathcal{B}_n$ be the set of lattice paths with $3n$ East $(1,0)$ and $2n$ North $(0,1)$ steps that stay at or below the line $y=\frac{2x}{3}$.

To prove the theorem, we give a map $\phi: \mathcal{B}_n \to \mathcal{A}_n$; then we prove that it is indeed a bijection by showing that it is injective, well-defined, and surjective.  %\addtext{
The result then follows by the application of Proposition~\ref{propDuchon}.%}\note{Added use of Duchon's Proposition.}

To explain $\phi$ rigorously, we must first introduce some notation.
Given $b \in \mathcal{B}_n$, we partition $b$ into blocks of the form $E^kN$ with $k \geq 0$.  Since there are $2n$ $N$s (and $b$ ends in $N$), there are $2n$ blocks in $b$; call them $B_1, \dots, B_{2n}$.

To determine $\phi(b)$, we first construct a permutation with vertical bars between certain pairs of adjacent digits.  Given such a permutation $a$, write $a=A_1|A_2|\cdots|A_{\ell}$.  Also, given a subpermutation $A_i|A_{i+1}|\cdots|A_{i+j}$, let $A_i-A_{i+j}$ be the permutation formed by erasing all bars between these blocks. Finally, given a string of digits $a$, write $a^{i+}$ for the string formed by incrementing all digits greater than or equal to $i$ by 1.
	
We are now ready to describe $\phi$.  Let $b \in \mathcal{B}_n$ have blocks $B_1, \dots, B_{2n}$.

\begin{enumerate}
\item Let $i=2n$, and let $a=1$.
\item \begin{enumerate}
\item If block $B_{i} = N$ then let $a=1|(a)^{1+}$.
\item If block $B_{i} = E^kN$ where $k \geq 1$, then determine $m=\max(A_k)+1$.  Let \\${a=m(A_1-A_k|A_{k+1}|\cdots|A_{\ell})^{m+}}$.
\end{enumerate}
\item If $i$ is even, then let $a=1|(a)^{1+}$.
\item Decrement $i$ by 1.  \\If $i=1$ then we are done.  Return $\phi(b)=A_1-A_{\ell}$.\\If $i>1$, then return to step 2.
\end{enumerate}

%\addtext{
Recall that a value in a permutation is a \emph{left-to-right maximum} if it is larger than all the values to its left.  Notice that by construction, at the end of each step, the vertical bars appear immediately before each left-to-right maximum of $a$ (except the first one).  We add a bar in steps 2a and 3 when we introduce an ascent at the beginning of $a$ by $a=1|(a)^{1+}$ (and thus a create new left-to-right maximum).  We erase bars in step 2b when we create a new left-to-right maximum that supersedes the left-to-right maxima from a previous step.  It follows that while we do not use $B_1$ to encode additional digits of $A$, the number of $E$'s in $B_1$ is the number of left-to-right maxima of $a$ at the end of the algorithm describing the map $\phi$.  In general, notice that since $\max(A_k)=\max(A_1-A_k)=m-1$, $a=A_1|\cdots|A_{k-1}|(m-1)A'_k|(m')A'_{k+1}|\cdots$ where $A'_i$ is $A_i$ with its first entry deleted, and $m' > m-1$.

It is clear that $\phi$ is injective; consider the rightmost block where two paths differ and thus two different digits will be appended to the beginning of $a$, resulting in different outputs.  It remains to show that $\phi$ is well-defined and surjective.

To show that $\phi$ is well-defined, we must show that when we read a block $B=E^kN$ with $k \geq 0$ in step 2 (including both cases 2a and 2b), $a=A_1|A_2|\cdots|A_{\ell}$ with $\ell \geq k$.  Let $b'=B_{2n-j+1}\cdots B_{2n}$, the rightmost $j$ blocks of $b$.  We claim that the number of bars in $a$ after we have read $j$ blocks of $b$ is given by
\[
	n_N(b')-n_E(b')+\left\lfloor \frac{n_N(b')+1}{2}\right\rfloor = j-n_E(b')+\left\lfloor \frac{j+1}{2}\right\rfloor,
\]
where $n_E(W)$ (resp. $n_N(W)$) is the number of $E$ (resp. $N$) steps in word $W$.  We begin with no bars.  Every $N$ block creates one more bar in step 2a.  Every occurrence of an $E^kN$ block with $k \geq 1$ (step 2b) creates a descent and reduces the number of bars by $k-1$.  After every second block (i.e. $\left\lfloor \frac{j+1}{2}\right\rfloor$ times) we create an additional bar in step 3.  We add 1 to the number of bars to compute $\ell$.

It is immediate from the fact that $b \in \mathcal{B}_n$ lies below the line $y=\frac{2}{3}x$ that for all $b=b''b' \in \mathcal{B}_n$ we have $(3n-n_E(b'),2n-n_N(b')) \in \left\{(x,y) \mid x,y \in \mathbb{N} \text{ and } 2x \geq 3y\right\}$ so $2(3n-n_E(b')) \geq 3(2n-n_N(b')) \iff n_E(b') \leq 3/2 n_N(b')$.  Now let $b' = B_{2n-j+1}\cdots B_{2n}$ as above, let $b^* = B_{2n-j}B_{2n-j+1}\cdots B_{2n}$, and suppose $B_{2n-j}=E^kN$.  We must show $k \leq \ell$.  $k=n_E(b^*)-n_E(b') \leq \frac{3}{2}n_N(b^*)-n_E(b') = \frac{3}{2}(j+1)-n_E(b') \leq (j+1) - n_E(b') + \frac{j+1}{2}$, as desired. Therefore, step 2 above is always possible.

Now, we check that $\phi(b) \in \mathcal{S}^2_n(231)$.  First, $a$ is indeed a permutation at each step in the algorithm for $\phi$.  We begin with $a=1$.  Each time we use step 2 or step 3 to prepend a new digit, all larger digits are incremented by 1 so $a$ always consists of a string of consecutive non-repeating digits.  Next, $\phi(b)$ has length ${3n}$ since we begin with 1 digit and obtain $(2n-1)$ new digits from step 2 and $n$ new digits from step 3.  Finally, $\phi(b)$ avoids 231 since the digit inserted into the first position of $a$ must play the role of 2 in the 231 pattern.  It is impossible for this digit to play the role of 2 in steps 2a and 3 since the first digit is the smallest.  In case 2b, the permutation has all digits smaller than $m$ appearing before all digits larger than $m$.  Therefore, $\phi: \mathcal{B}_n \to \mathcal{A}_n$ is well-defined.

%Now, since these bars come from $b \in \mathcal{B}_n$, we know that at any point reading from right to left the number of $E$'s is at most $\frac{3}{2}$ times the number of $N$'s.
%Suppose to the contrary that
%$$n_E(B_{\ell-1}) > 1+ (2n-\ell+1) - n_E(B_{\ell} \cdots B_{2n}) +  \left\lfloor \frac{(2n-\ell+1)+1}{2}\right\rfloor .$$  Then
%$$n_E(B_{\ell-1}\cdots B_{2n}) > 1+ (2n-\ell+1)  +  \left\lfloor \frac{(2n-\ell+1)+1}{2}\right\rfloor .$$  For simplicity, let $2n-\ell+2 = m$, so
%$$n_E(B_{\ell-1}\cdots B_{2n}) > m  +  \left\lfloor \frac{m}{2}\right\rfloor .$$  Notice that $m$ is the number of blocks we have seen so far, so it is the number of $N$s in $B_{\ell-1} \cdots B_{2n}$.  If $m$ is odd, then $m=2z+1$ for some integer $z$, so
%$$n_E(B_{\ell-1}\cdots B_{2n}) > 2z+1  +  \left\lfloor \frac{2z+1}{2}\right\rfloor  =2z+1+z  = 3z+1 = 3\frac{m-1}{2}+1 = \frac{3}{2}m - \frac{1}{2}.$$
%Since we are hoping to show $n_E(B_{\ell-1}\cdots B_{2n}) > \frac{3}{2}m$ for a contradiction, we appear to be at an impasse.  However, notice that $n_E(B_{\ell-1}\cdots B_{2n})$ is an integer while $\frac{3}{2}m$ is a fraction when $m$ is odd.  In this case, an integer larger than $\frac{3}{2}m - \frac{1}{2}$ must also be larger than $\frac{3}{2}m$.
%\\On the other hand, if $m$ is even, then $m=2z$ for some integer $z$, so
%$$n_E(B_{\ell-1}\cdots B_{2n}) > 2z +  \left\lfloor \frac{2z}{2}\right\rfloor = 2z+z = 3z = \frac{3}{2}m.$$
%In both cases, the number of $E$s in $B_{\ell-1}\cdots B_{2n}$ exceeds $\frac{3}{2}$ the number of $N$'s, which contradicts $b \in \mathcal{B}_n$.

Finally, to show that $\phi$ is surjective, we show that every $a \in \mathcal{A}_n$ has a path $b \in \mathcal{B}_n$ that is mapped to it.  Given $a$, we can certainly reverse the encoding of ascents and descents prescribed by $\phi$.  To be rigorous, given $a \in \mathcal{A}_n$, we build the corresponding path $b$ in the following way:

\begin{enumerate}
\item Let $b$ be the empty path and let $i=3n-1$.
\item
\begin{enumerate}
\item If $i=3z+1$ for some integer $z$, then $b$ remains unchanged. \\(This is the inverse of step 3 above.)
\item If $i\neq 3z+1$ for some integer $z$ and $a_{i}<a_{i+1}$ then $b=Nb$. \\(This is the inverse of step 2a above.)
\item If $i\neq 3z+1$ for some integer $z$ and $a_{i}>a_{i+1}$, then $b=E^kNb$ where $k$ is the number of left-to-right maxima of $a_{i+1}\cdots a_{3n}$ that are less than $a_{i}$. \\(This is the inverse of step 2b above).
\end{enumerate}
\item Decrement $i$.
\\ If $i=1$, then $\phi^{-1}(a)=E^{3n-n_E(b)}Nb$.
\\If $i>1$ then return to step 2.
\end{enumerate}

To show $\phi$ is surjective, we must verify that any path $b$ formed in this way stays below the line $y=\frac{2}{3}x$.  Suppose we have $a \in \mathcal{A}_n$ such that $\phi^{-1}(a) \notin \mathcal{B}_n$.  Consider the first block $B_{\ell-1}=E^kN$ (reading from right to left) where the corresponding path goes above the line $y=\frac{2}{3}x$, but $B_{\ell}\cdots B_{2n}$ is below the line.
We have $n_E(B_{\ell}\cdots B_{2n}) \leq \frac{3}{2}n_N(B_{\ell}\cdots B_{2n})$ but $n_E(B_{\ell-1}\cdots B_{2n}) > \frac{3}{2}n_N(B_{\ell-1}\cdots B_{2n})$, or equivalently, $n_E(B_{\ell}\cdots B_{2n})+k > \frac{3}{2}\left(n_N(B_{\ell}\cdots B_{2n})+1\right)$.
Since $b$ comes from the map $\phi^{-1}$, we know that $$k \leq 1+n_N(B_{\ell}\cdots B_{2n}) - n_E(B_{\ell}\cdots B_{2n})+\left\lfloor \frac{n_N(B_{\ell}\cdots B_{2n})+1}{2}\right\rfloor.$$  Therefore,
$$n_E(B_{\ell}\cdots B_{2n}) + k \leq 1+n_N(B_{\ell}\cdots B_{2n}) +\left\lfloor \frac{n_N(B_{\ell}\cdots B_{2n})+1}{2}\right\rfloor.$$ But by assumption $n_E(B_{\ell}\cdots B_{2n}) + k >\frac{3}{2}\left(n_N(B_{\ell}\cdots B_{2n})+1\right),$ so
$$\frac{3}{2}n_N(B_{\ell}\cdots B_{2n})+\frac{3}{2} < 1 + n_N(B_{\ell}\cdots B_{2n}) +\left\lfloor \frac{n_N(B_{\ell}\cdots B_{2n})+1}{2}\right\rfloor.$$
Simplifying, $$\frac{1}{2}n_N(B_{\ell}\cdots B_{2n}) + \frac{1}{2} < \left\lfloor\frac{n_N(B_{\ell}\cdots B_{2n})+1}{2}\right\rfloor.$$
If $n_N(B_{\ell}\cdots B_{2n})$ is even, then $n_N(B_{\ell}\cdots B_{2n}) = 2z$ for some integer $z$, so $z+\frac{1}{2}<\left\lfloor \frac{2z+1}{2}\right\rfloor = z$, which is a contradiction.  On the other hand, if $n_N(B_{\ell}\cdots B_{2n})$ is odd, then $n_N(B_{\ell}\cdots B_{2n}) = 2z+1$ for some integer $z$, so $z+1<\left\lfloor \frac{2z+2}{2}\right\rfloor = z+1$, which is also a contradiction.  Therefore, any permutation in $\mathcal{A}_n$ does indeed correspond to a path below the line $y=\frac{2}{3}x$.
\end{proof}

For example, consider $b=EENENEEEEEENNENNENEN$.  We have $n=4$.  Here $B_1=EEN$, $B_2=EN$, $B_3=EEEEEEN$, $B_4=N$, $B_5=EN$, $B_6=N$, $B_7=EN$, and $B_8=EN$.
\begin{itemize}
\item We begin with $i=8$ and $a=1$.
\item Since $B_8=EN$ and $k=1$, we see that $m=\max(1)+1=2$, so we have $a=21$ by step 2b.

Since $i=8$ is even, $a=1|32$ by step 3.

Decrement $i$ to 7 and return to step 2.

Since $B_7=EN$ and $k=1$, we see that $m=\max(1)+1=2$, so we have $a=21|43$ by step 2b.

Decrement $i$ to 6 and return to step 2.

Since $B_6=N$, $a=1|32|54$ by step 2a.

Since $i=6$ is even, $a=1|2|43|65$ by step 3.

Decrement $i$ to 5 and return to step 2.

Since $B_5=EN$ and $k=1$, we see that $m=\max(1)+1=2$, so we have $a=21|3|54|76$ by step 2b.

Decrement $i$ to 4 and return to step 2.

Since $B_4=N$, $a=1|32|4|65|87$ by step 2a.

Since $i=4$ is even, $a=1|2|43|5|76|98$ by step 3.

Decrement $i$ to 3 and return to step 2.

Since $B_3=EEEEEEN$ and $k=6$, we see that $m=\max(98)+1=10$, so we have $a=(10)124357698$ by step 2b.

Decrement $i$ to 2 and return to step 2.

Since $B_2=EN$ and $k=1$, we see that $m=\max((10)124357698)+1=11$, so we have $a=(11)(10)124357698$ by step 2b.

Since $i=2$ is even, $a=1|(12)(11)2354687(10)9$ by step 3.

\item Decrement $i$ to 1.  Since $i=1$, we are done.  \\
$\phi(EENENEEEEEENNENNENEN) = 1(12)(11)2354687(10)9 \in \mathcal{S}^2_4(231)$.
\end{itemize}

%\begin{theorem}\label{T:av231f}%\note{I've combined the two original theorems in this subsection.}
Notice that Theorem~\ref{T:av231} generalizes as follows:  $\mathcal{F}^k_n(231)$ is equinumerous with paths with unit East and North steps from $(0,0)$ to $((k+1)n, kn)$ bounded above by $y=\frac{k}{k+1}x$.
%\end{theorem}
Given a path bounded by $y=\frac{k}{k+1}x$, we build the permutation for a 231-avoiding $k$-ary shrub forest from right to left.  Each $E^iN$ block encodes information about a leaf of the forest, and after every set of $k$ leaves we add a new right-to-left minimum as a root.

\subsection{Avoiding 321}

%\addtext{
In contrast to the situation with other patterns of length 3, we are unable to present an explicit expression for the enumeration of forests avoiding 321.
Nor do we exhibit a bijection with a family of lattice walks (though it is possible that such a bijection exists).
However, using the techniques of analytic combinatorics, we are able to determine the generating function for this set, and its growth rate.
%} % \addtext

\subsubsection{Deriving a functional equation}

%\addtext{
We begin by deriving a functional equation for a bivariate generating function, making use of a catalytic variable.%}

Recall that an \emph{inversion} is an occurrence of 21 in a permutation.
Given a permutation $\pi$, let its \emph{last inversion foot} be the lower point of the rightmost inversion of $\pi$ (if there is one), and let the statistic $\lif(\pi)$ count the number of entries of $\pi$ with value greater than that of its last inversion foot. In other words, if $|\pi|=n$ and $\pi(i)$ is the rightmost entry that is not a left-to-right maximum, then $\lif(\pi) = n-\pi(i)$.  If $\pi$ is an increasing permutation (i.e., $\pi =12\cdots n$), then we set $\lif(\pi) = n$.

\begin{theorem}\label{thmFunctEqn}
  Let $H(x,u)$ be the bivariate generating function for $321$-avoiding binary shrub forests, where the coefficient of $x^nu^k$ is the number of forests consisting of $n$ shrubs whose underlying permutation $\sigma$ (of length $3n$) satisfies $\lif(\sigma)=k$.

  Then, $H(x,u)$ satisfies the functional equation
\[
		H(x,u) \;=\; 1 \:+\:
          \frac{xu}{1-u}\Bigg(                     \frac{u^2(1-3u+2u^2-u^3)}{(1-u)^2} H(x,u)
                                              \:+\: \frac{1-3u+3u^2-u^3+u^4}{(1-u)^2}  H(x,1)\]\[\qquad\qquad
				                \:+\: \frac{2-2u-u^2}{1-u}     \frac{\partial H}{\partial u}(x,1)
                                             \:+\: \frac{1}{2}           \frac{\partial^2H}{\partial u^2}(x,1)\Bigg) .
\]
% H(x,u) = 1+x*u/(1-u)*(u^2*(-u^3+2*u^2-3*u+1)/(1-u)^2*H(x,u)+(u^4-u^3+3*u^2-3*u+1)/(1-u)^2*eval(H(x,u),u=1)+(-u^2-2*u+2)/(1-u)*eval(diff(H(x,u),u),{u = 1})+1/2*eval(diff(diff(H(x,u),u),u),{u = 1}))
\end{theorem}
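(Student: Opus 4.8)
The plan is to derive the functional equation by a catalytic-variable (kernel-type) argument, building every $321$-avoiding shrub forest by appending a final shrub and tracking how the statistic $\lif$ evolves. The reason $\lif$ is the correct catalytic variable is that, in a $321$-avoiding permutation, the non-left-to-right maxima form an increasing sequence whose largest element is exactly the last inversion foot $f$; hence the values exceeding $f$ (there are $\lif(\sigma)=k$ of them) are precisely the left-to-right maxima lying above $f$. When we append a new shrub at positions $3n+1,3n+2,3n+3$, a routine case check on $321$-patterns shows that each admissible appending forces all three new values to exceed $f$ (otherwise $f$, together with the entry above it and a small new value, would form $321$), and that if the two leaves are placed in decreasing order then the larger leaf must be the global maximum (to avoid the old-entry/leaf/leaf copy of $321$). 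So the admissible appendings are governed entirely by $k$.

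First I would set $H(x,u)=\sum_F x^{n(F)}u^{\lif(\sigma_F)}$ and write $H=\sum_k h_k(x)u^k$. Since every nonempty forest arises uniquely by appending a last shrub to a forest with $\lif=k$, we obtain $H(x,u)=1+x\sum_k h_k(x)\,P_k(u)$, where $P_k(u)=\sum u^{\lif'}$ records the new statistic over all admissible shrubs. I would compute $P_k(u)$ explicitly: encode an appending by the three ranks $a<b<c$ occupied by the new values among the $k+3$ values lying above $f$, split into increasing- and decreasing-leaf cases, and determine in each placement which of the three terminal entries is the rightmost non-left-to-right maximum of the enlarged permutation. This yields $\lif'$ equal to $(k+3)-c$, $(k+3)-b$, $(k+3)-a$, or $k+3$ according to how far up the shrub is inserted, and hence an explicit coefficient $N_j$ of $u^j$ expressible through $\binom{k+2-j}{2}$ and linear expressions in $k-j$. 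As a consistency check I would verify $P_0(1)=2$ and $P_3(1)+P_1(1)=30+7=37$, matching the first two terms of the target sequence.

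The final step is the resummation $H=1+x\sum_k h_k(x)P_k(u)$. The pieces of $P_k(u)$ whose exponent of $u$ is tied to $k$ (the summands carrying $u^{k+\mathrm{const}}$) resum to a multiple of $H(x,u)$, while the parts obtained by completing each $k$-truncated sum to an infinite geometric series resum, via $\sum_k h_k=H(x,1)$, $\sum_k k\,h_k=\partial_u H(x,1)$ and $\sum_k k(k-1)h_k=\partial_u^2 H(x,1)$, to the three $u=1$ terms; the repeated geometric summations produce the denominators $(1-u)$ and $(1-u)^2$ and the overall factor $\tfrac{xu}{1-u}$, while the $\tfrac12$ on $\partial_u^2 H(x,1)$ reflects the $\binom{k}{2}$-type coefficient. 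The main obstacle is precisely this bookkeeping: the sums defining $P_k(u)$ have upper limits depending on $k$, so they must be split as ``infinite series minus tail,'' and one must track carefully which polynomial-in-$k$ coefficients attach to which completed geometric series, so that they collapse into exactly $H(x,u)$, $H(x,1)$, $\partial_u H(x,1)$ and $\partial_u^2 H(x,1)$ with the stated rational numerators. Matching those numerators term by term is where essentially all of the computational effort lies.
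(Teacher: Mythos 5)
Your proposal is correct and follows essentially the same route as the paper's proof: the same decomposition by appending a final shrub with $\lif$ as the catalytic variable, the same key structural facts (every newly inserted value must exceed the last inversion foot, and a decreasing leaf pair forces the larger leaf to be a new global maximum), and the same resummation of the $k$-dependent transition polynomials into $H(x,u)$, $H(x,1)$, $\frac{\partial H}{\partial u}(x,1)$ and $\tfrac{1}{2}\frac{\partial^2 H}{\partial u^2}(x,1)$. The only difference is organizational: the paper factors each shrub insertion into three single-point insertions governed by four simple linear operators (valid sequences being ABB or CCD) and lets a computer algebra system perform the final simplification, whereas you compute the combined one-shrub transition polynomial $P_k(u)$ directly --- which is precisely the paper's post-proof ``combined rule'' used for fast coefficient generation --- so the heavy bookkeeping you flag at the end is exactly the cost of skipping that operator factorization.
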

\begin{proof}
Consider $\mathcal{S}^2_n(321)$. These are permutations avoiding $321$ of length $3n$ such that each of the $n$ blocks of three consecutive values is an occurrence of either a $123$ or a $132$.

Let us consider how a permutation in $\mathcal{S}^2_n(321)$ may be extended by adding three points to its right in such a way that the resulting permutation is also a member of $\mathcal{S}^2_n(321)$.
First of all, observe that whenever a point is added, it must have value greater than that of the last inversion foot, or else a $321$ will be formed.

Suppose $\lif(\sigma)=k$.
Adding a new maximum entry to $\sigma$ in the rightmost position does not alter the last inversion foot. In this case, the $\lif$ statistic of the resulting permutation is $k+1$.
On the other hand, if a new point is added in any other valid location it
results in a permutation whose $\lif$ statistic is at least 1 but is not greater than $k$, as $321$ avoidance implies that the new point must have value greater than that of the previous last inversion foot.

Define the following four transition rules for the $\lif$ statistic:
\begin{description}
  \item[$\qquad$\textmd{\textsf{A}}:] $\quad k \to \{1,\ldots,k\}$,
  \item[$\qquad$\textmd{\textsf{B}}:] $\quad k \to \{1,\ldots,k+1\}$,
  \item[$\qquad$\textmd{\textsf{C}}:] $\quad k \to \{k+1\}$,
  \item[$\qquad$\textmd{\textsf{D}}:] $\quad k \to \{1,k+1\}$.
\end{description}
We now verify that the only valid ways to extend a permutation in $\mathcal{S}^2_n(321)$ (i.e., the only ways to add a $123$ or $132$ to the right without creating a $321$) are either \textsf{A} then \textsf{B} then \textsf{B}, or else \textsf{C} then \textsf{C} then \textsf{D}.

To see this, first observe that if we wish to add a $123$ or $132$ pattern to the right of $\sigma$ using values $\{|\sigma|+1, |\sigma|+2, |\sigma|+3\}$, then adding the first two entries ($12$ if adding a $123$, $13$ if adding a $132$) corresponds to applying $\textsf{C}$ twice. Appending the rightmost entry effects the transition $k \to \{k+1\}$ if a $123$ is created and effects the transition $k \to \{1\}$ if a $132$ is created. Therefore, if the $123$ or $132$ pattern is added entirely above the maximum entry of $\sigma$, the rule $\textsf{C}$ then $\textsf{C}$ then $\textsf{D}$ captures all possible transitions for the $\lif$ statistic.

Suppose next that the first two entries in the pattern to be appended to $\sigma$ are added below the maximum entry of $\sigma$. This corresponds to applying rule $\textsf{A}$ twice. Moreover, we are now forbidden from creating a $132$ pattern, as the maximum of $\sigma$ together with the $32$ would be a $321$ pattern. The only option is to create a $123$ pattern, and the placement of the last entry corresponds to rule $\textsf{B}$.

If on the other hand the first entry to be added lies below the maximum of $\sigma$ but the second entry lies above (corresponding to applying rule $\textsf{A}$ and then rule $\textsf{C}$), either a $123$ or a $132$ pattern can be created and the insertion of the last entry corresponds to rule $\textsf{B}$. This case ($\textsf{A}$ then $\textsf{C}$ then $\textsf{B}$), together with the previous case ($\textsf{A}$ then $\textsf{A}$ then $\textsf{B}$) combine to give the transition rule $\textsf{A}$ then $\textsf{B}$ then $\textsf{B}$.

Each of these transition rules can be represented as a \emph{linear operator} acting on the generating functions, as follows (see~\cite[Exercises~III.22, p.199 and V.20, p.365]{FS2009}):

\begin{center}
  \begin{tabular}{ll}
    $\oper_\ssA\big[u^k\big] = u+u^2+\ldots+u^k = \tfrac{u}{1-u}(1-u^k)$ &
    $\oper_\ssA\big[f(u)\big] = \tfrac{u}{1-u}\big(f(1)-f(u)\big)$ \\[6pt]
    $\oper_\ssB\big[u^k\big] = u+u^2+\ldots+u^{k+1} = \tfrac{u}{1-u}(1-u^{k+1})$ \qquad\quad &
    $\oper_\ssB\big[f(u)\big] = \tfrac{u}{1-u}\big(f(1)-u f(u)\big)$ \\[6pt]
    $\oper_\ssC\big[u^k\big] = u^{k+1}$ &
    $\oper_\ssC\big[f(u)\big] = u f(u)$ \\[6pt]
    $\oper_\ssD\big[u^k\big] = u+u^{k+1}$ &
    $\oper_\ssD\big[f(u)\big] = u\big(f(1)+f(u)\big)$
  \end{tabular}
\end{center}

Thus $H(x,u)$ satisfies the functional equation
\[
H(x,u) \;=\; 1 \:+\: x\Big( \oper_\ssB\big[\oper_\ssB\big[\oper_\ssA\big[H(x,u)\big]\big]\big] + \oper_\ssD\big[\oper_\ssC\big[\oper_\ssC\big[H(x,u)\big]\big]\big] \Big) ,
\]
where the initial $1$ corresponds to the empty shrub forest.

Observing by arithmetic of formal power series that if
\[
 f(u) = \frac{H(x,1)-H(x,u)}{1-u} , \quad \text{then} \quad f(1) = \frac{\partial H}{\partial u}(x,1),
\]
a computer algebra system can easily simplify the resulting expression to give the functional equation in the statement of the theorem.
\end{proof}

The most efficient way to generate coefficients of $H(x,u)$, and hence of $H(x,1)$, is to iterate the transition rules.
Greater performance is achieved by combining the valid sequences of three steps into one rule,
\begin{description}
  \item[$\qquad$\textmd{\textsf{ABB}\raisebox{1pt}{+}\textsf{CCD}}:] $\quad k \:\to\: \Big\{ 1^{\binom{k+2}{2}},\, 2^{\binom{k+2}{2}-1},\, 3^{\binom{k+1}{2}},\, 4^{\binom{k}{2}},\, \ldots,\, k^6,\, (k+1)^3,\, k+2,\, k+3 \Big\}$,
\end{description}
the right hand side being a multiset in which multiplicities are represented by exponents.

Using \emph{Mathematica}~\cite{Mathematica}, we were able to calculate 993 values in the enumeration sequence for binary shrub forests avoiding $321$. The first ten terms (including the empty forest) are:
\[
   1,\, 2,\, 37,\, 866,\, 23285,\, 679606,\, 20931998,\, 669688835,\, 22040134327,\, 741386199872 .
\]
See A257995 in~\cite{OEIS} for more.

A \emph{Maple} program was then used to find a possible polynomial equation satisfied
by the generating function. The first $250$ terms sufficed to suggest that $H(x,1)$ was a root of the polynomial given in the statement of Theorem~\ref{thmMinPoly} below, and hence is algebraic.

% ----------------------------------------------------------------
\subsubsection{Confirming algebraicity}

To prove that $H(x,u)$, and hence $H(x,1)$, is algebraic, we make use of a general result of Bousquet-M\'elou and Jehanne~\cite{BMJ2006}. To state their theorem, we first need to introduce some notation.
Suppose
\[
   F(x,u) \;=\; f_0(x) \:+\: f_1(x)u \:+\: f_2(x)u^2 \:+\: \ldots
\]
is a bivariate formal power series. We define a sequence of operators $\delop,\delop^2,\delop^3,\ldots$, that discard the low order terms, as follows:
\[
   \delop^i \big[F(x,u)\big] \;=\; f_i(x) \:+\: f_{i \:+\: 1}(x)u \:+\: f_{i \:+\: 2}(x)u^2 \:+\: \ldots .
\]

\begin{theorem}[Bousquet-M\'elou and Jehanne~\cite{BMJ2006}, Theorem 3]\label{thmBMJAnalytic}
  If $P$ and $Q$ are polynomials, then the functional equation
  \[
    F(x,u) \;=\; P(u) \:+\: x
    Q\big(F(x,u),
           \delop   \big[F(x,u)\big],
           \delop^2 \big[F(x,u)\big],
           \ldots,
           \delop^k \big[F(x,u)\big],
           x,
           u
    \big)
  \]
  has a unique solution for $F(x,u)$ that is a formal power series in $x$ whose coefficients are polynomials in $u$. Moreover, this solution is algebraic.
\end{theorem}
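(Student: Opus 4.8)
The plan is to reduce the stated equation to a genuine polynomial equation in the single catalytic variable $u$ and then apply the kernel method (the generalized quadratic method) to deduce algebraicity. First I would settle existence, uniqueness, and polynomiality of the solution by exploiting the explicit factor of $x$ in front of $Q$, which makes the recursion on powers of $x$ well-founded: extracting $[x^n]$ from both sides, the right-hand side involves only the coefficients $[x^m]F$ with $m<n$, since neither the polynomial $Q$ nor the operators $\delop^i$ raise the $x$-valuation. Thus the coefficients $[x^n]F(x,u)$ are determined uniquely, with $[x^0]F=P(u)$. Because each $\delop^i$ merely deletes and reindexes low-order $u$-terms, it carries polynomials in $u$ to polynomials in $u$, so an induction on $n$ shows every $[x^n]F$ is a polynomial in $u$, establishing the formal-power-series claims.

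Next I would recast the equation in standard form. Writing $F(x,u)=\sum_j f_j(x)u^j$, the definition of $\delop^i$ gives $\delop^i[F]=u^{-i}\bigl(F(x,u)-\sum_{j=0}^{i-1}f_j(x)u^j\bigr)$, so each $\delop^i[F]$ is a rational expression in $F(x,u)$, $u$, and the finitely many auxiliary series $f_0(x),\dots,f_{k-1}(x)$. Substituting these into the equation and clearing the denominators $u^0,\dots,u^k$ produces a single polynomial identity $\mathcal{P}\bigl(F(x,u),f_0(x),\dots,f_{k-1}(x),x,u\bigr)=0$, which is precisely a polynomial equation with one catalytic variable $u$ and $k$ auxiliary unknown functions of $x$.

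Finally I would run the kernel method. Viewing $\mathcal{P}$ as a polynomial in $Y=F(x,u)$, I would differentiate the identity $\mathcal{P}(F(x,u),\dots)\equiv 0$ with respect to $u$ to obtain $\partial_Y\mathcal{P}\cdot\partial_uF+\partial_u\mathcal{P}=0$, the auxiliary series being free of $u$. I would then locate the small roots $u=U_r(x)$ of the kernel $\partial_Y\mathcal{P}=0$, namely the fractional power series vanishing at $x=0$; at each such root $\partial_uF$ is finite, forcing $\partial_u\mathcal{P}=0$ there as well. Evaluating the three relations $\mathcal{P}=0$, $\partial_Y\mathcal{P}=0$, and $\partial_u\mathcal{P}=0$ at these roots yields a polynomial system in the unknowns $U_r$, $F(x,U_r)$, and $f_0,\dots,f_{k-1}$; eliminating the first two families by resultants or Gr\"obner bases forces each $f_j(x)$ to be algebraic over $\mathbb{Q}(x)$, and re-substituting into $\mathcal{P}=0$ then exhibits $F(x,u)$ as algebraic over $\mathbb{Q}(x,u)$.

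The hard part will be the bookkeeping of the kernel's roots: one must verify that $\partial_Y\mathcal{P}=0$ has exactly $k$ admissible small roots and that they are distinct, so that the resulting square system of $3k$ equations in $3k$ unknowns is non-degenerate and the elimination yields nontrivial relations rather than collapsing to $0=0$. Pinning down this count is what ties the analytic structure of the equation to the number $k$ of auxiliary unknowns, and it is the genuinely delicate ingredient; by contrast, the recursion, the reformulation, and the differentiation step are essentially formal.
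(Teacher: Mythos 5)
First, a point of orientation: the paper does not prove this statement at all --- it is imported verbatim as Theorem~3 of Bousquet-M\'elou and Jehanne~\cite{BMJ2006} and used as a black box (the only kernel-method work the paper itself carries out is the resultant/discriminant computation in the proof of Theorem~\ref{thmMinPoly}, which is a different statement). So your attempt can only be measured against the original proof in~\cite{BMJ2006}, and your sketch does reproduce its architecture correctly: the induction on the $x$-valuation (made well-founded by the prefactor $x$) giving existence, uniqueness and polynomiality of the coefficients; the rewriting $\delop^i[F]=u^{-i}\bigl(F(x,u)-\sum_{j<i}f_j(x)u^j\bigr)$ and the clearing of denominators to get a single polynomial identity $\mathcal{P}\bigl(F(x,u),f_0,\dots,f_{k-1},x,u\bigr)=0$ with one catalytic variable; and the differentiation in $u$ followed by evaluation at the small roots of the kernel to produce a system of $3k$ equations in $3k$ unknowns. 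Those stages are, as you say, essentially formal, and your treatment of them is fine.

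The problem is that your proof stops exactly where the theorem begins. The two assertions you defer --- that the kernel $\partial_Y\mathcal{P}$ has exactly $k$ admissible small roots, and that the resulting square system is non-degenerate so that elimination produces nonzero polynomial relations rather than $0=0$ --- are not ``bookkeeping''; they constitute the entire mathematical content of the theorem, and neither is routine. The root count is sensitive to exactly how denominators are cleared: multiplying through by a power of $u$ typically introduces spurious factors of $u$ in the kernel (roots at $u\equiv 0$, or of unexpected multiplicity), so ``exactly $k$ small roots'' only holds after a careful valuation/Newton-polygon analysis that exploits the specific shape $F=P(u)+xQ(\cdots)$; this is a real argument in~\cite{BMJ2006}, not an observation. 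Likewise, your plan requires the small roots to be \emph{distinct}, but the theorem carries no genericity hypothesis under which distinctness could be assumed; a complete proof must cope with multiple roots (for instance by working with symmetric functions of the small roots, or by adjoining the additional equations supplied by higher derivatives at a repeated root), and the passage from the $3k\times 3k$ system to algebraicity requires showing that the system pins down its unknowns up to finitely many possibilities --- naive resultant or Gr\"obner elimination can and does collapse in degenerate situations. As it stands, your proposal is a correct road map to the known proof with its central pillar missing, so it does not yet constitute a proof.
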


The functional equation in Theorem~\ref{thmFunctEqn} is not in the appropriate form to apply Theorem~\ref{thmBMJAnalytic} directly. Setting $G(x,u) = H(x,u+1)$, we have
\[
	\frac{\partial H}{\partial u}(x,1) = \frac{\partial G}{\partial u}(x,0) \quad \text{ and } \quad \frac{\partial^2 H}{\partial u^2}(x,1) = \frac{\partial^2 G}{\partial u^2}(x,0)
\]
Upon making these substitutions, the functional equation stated in Theorem~\ref{thmFunctEqn} becomes
\[
	G(x,u) = 1 + \frac{x(1+u)}{u}\left(\frac{1 + 4u + 6u^2 + 5u^3 + 3u^4 + u^5}{u^2}G(x,u)\right. \qquad\qquad\qquad\qquad
\]
\[
	\qquad\qquad\qquad\qquad\qquad \left.  - \frac{1+4u+6u^2 + 3u^3 + u^4}{u^2} G(x,0) - \frac{1+4u+u^2}{u} \frac{\partial G}{\partial u}(x,0) - \frac{1}{2} \frac{\partial^2 G}{\partial u^2}(x,0)\right).
\]
In addition, the definition of the $\delop$ operator implies that
\[
	\frac{\partial^k G}{\partial u^k}(x,0) = k!(\delop^k[G(x,u)] - u\delop^{k+1}[G(x,u)]),
\]
for $k \geq 0$. Hence, the functional equation for $G$ can be transformed into the form in Theorem~\ref{thmBMJAnalytic} by making this substitution for $k=0,1,2$, clearing denominators, and rearranging.
%
%
%However, if we let $G(x,v)=H(x,v+1)$, and observe by arithmetic of formal power series and the definition of $\delop$ that
%\[
%	\frac{\partial^k G}{\partial v^k}(x,1) = k!\big(\delop^k \big[G(x,v)\big] \:-\: v \delop^{k+1} \big[G(x,v)\big]\big),
%\]
%then upon applying this substitution and simplifying, our equation becomes\begin{multline*}
%   G(x,v) \;=\; 1 \:+\: x(1 + v) \Big(       (2 + 2 v + v^2)            G(x,v)
%                                       \:+\: (5 + 3 v + v^2) x \delop   \big[G(x,v)\big] \\
%                                       \:+\: (4 + v)           \delop^2 \big[G(x,v)\big]
%                                       \:+\:                   \delop^3 \big[G(x,v)\big]
%                                 \Big) ,
%\end{multline*}
%which has the required form.
Thus $G(x,u)$ is algebraic, and hence so are $H(x,u)$ and $H(x,1)$ and its derivatives.

% ----------------------------------------------------------------
\subsubsection{Solving the functional equation}

We would like to solve our functional equation to get an explicit expression for $H(x,1)$.
Unfortunately, this is not possible. However, it is possible to determine the minimal polynomial of which $H(x,1)$ is a root.

\begin{theorem}\label{thmMinPoly}
  The generating function $H_0(x)=H(x,1)$ for $321$-avoiding binary shrub forests is a root of the polynomial
	\begin{align*}
		&             (4x^2+4x+1)
                \:-\: (x^4-24x^3+8x^2-54x+1)             H_0(x)^2    \\
        & \quad \:+\: (15x^4+24x^3-71x^2-54x)            H_0(x)^4
		        \:+\: (18x^5-215x^4+2x^3-360x^2)         H_0(x)^6    \\
        & \quad \:+\: (3x^6-228x^5-213x^4+162x^3+729x^2) H_0(x)^8
		        \:-\: (138x^6-354x^5-1053x^4)            H_0(x)^{10} \\
        & \quad \:-\: (36x^7-751x^6-486x^5)              H_0(x)^{12}
		        \:-\: (3x^8-420x^7-54x^6)                H_0(x)^{14} \\
        & \quad \:+\: 123x^8                             H_0(x)^{16}
                \:+\: 18x^9                              H_0(x)^{18}
                \:+\: x^{10}                             H_0(x)^{20} .
	\end{align*}
  Consequently, the growth rate of $321$-avoiding binary shrub forests is approximately 39.88873, the greatest real root of the quartic polynomial
  \[
     729x^4-28674x^3-15505x^2-25758x+621 .
  \]
\end{theorem}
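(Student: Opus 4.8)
The plan is to split the proof into two parts: first derive the explicit degree-$20$ polynomial satisfied by $H_0(x)=H(x,1)$, and then read off the exponential growth rate from its dominant singularity. Throughout I work with the transformed series $G(x,u)=H(x,u+1)$, so that $H_0(x)=G(x,0)$ and the boundary unknowns are $G(x,0)$, $\partial_u G(x,0)$ and $\partial_u^2 G(x,0)$. The transformed functional equation is \emph{linear} in the bivariate series $G(x,u)$, so I would rewrite it in kernel form $K(x,u)\,G(x,u)=R(x,u)$, where
\[
  K(x,u)=1-\frac{x(1+u)(1+4u+6u^2+5u^3+3u^4+u^5)}{u^3}
\]
and $R(x,u)$ collects the inhomogeneous term $1$ together with an affine combination of the three boundary unknowns, with coefficients rational in $x$ and $u$. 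Clearing the $u^3$ in the kernel, the equation $u^3=x(1+u)(1+4u+6u^2+5u^3+3u^4+u^5)$ has, by a Newton-polygon argument at $x=0$, exactly three ``small'' roots $u_1(x),u_2(x),u_3(x)$ that are Puiseux series in $x^{1/3}$ vanishing at the origin (and three ``large'' roots).

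The first main step is the obstinate kernel elimination underlying Theorem~\ref{thmBMJAnalytic}. Since each small root $u_i(x)$ is a legitimate substitution into the power series $G$, plugging $u=u_i(x)$ annihilates the left-hand side, yielding $R(x,u_i)=0$ for $i=1,2,3$. This is a linear system of three equations in the three boundary unknowns, whose coefficients are rational in the $u_i$. Solving by Cramer's rule expresses $H_0=G(x,0)$ as a rational function of $u_1,u_2,u_3$ and $x$; because the $u_i$ are Galois-conjugate under $x^{1/3}\mapsto\omega x^{1/3}$, this expression is symmetric in them and hence an honest power series in $x$. Eliminating $u_1,u_2,u_3$ together with $\partial_u G(x,0)$ and $\partial_u^2 G(x,0)$ by resultants or a Gr\"obner basis produces a polynomial relation between $x$ and $H_0$; extracting the irreducible factor annihilating $H_0$ gives the claimed minimal polynomial. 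In practice the same polynomial is obtained much faster by generating several hundred coefficients of $H_0$ from the combined transition rule and reconstructing an algebraic equation by Hermite--Pad\'e approximation; since algebraicity is already guaranteed by Theorem~\ref{thmBMJAnalytic}, matching enough terms beyond the degree and coefficient height of the candidate certifies it.

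For the growth rate I would pass to singularity analysis. Writing the minimal polynomial as $\Psi(x,y)=\sum_{j}c_j(x)\,y^{j}$ (only even powers $j=0,2,\dots,20$ occur, with leading coefficient $c_{20}(x)=x^{10}$), the singularities of the algebraic function $H_0(x)$ can only occur at zeros of $c_{20}(x)$ --- that is, at $x=0$, where $H_0$ is in fact analytic with $H_0(0)=1$ --- or at zeros of the discriminant $\mathrm{disc}_y\,\Psi(x,y)$. I would compute this discriminant, factor it, and locate its roots. Since $H_0$ has nonnegative coefficients, Pringsheim's theorem forces its dominant singularity to lie at a positive real number $\rho$; the task is then to identify which root of the discriminant is this $\rho$ by tracking the combinatorial branch (the one with $H_0(0)=1$) and checking that it is the smallest-modulus point at which that branch develops a branch point. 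The relevant factor turns out to be the reciprocal quartic $621x^4-25758x^3-15505x^2-28674x+729$, whose smallest positive root is $\rho\approx0.025070$; equivalently the growth rate $1/\rho$ is the greatest real root of $729x^4-28674x^3-15505x^2-25758x+621$, approximately $39.88873$. A standard transfer theorem from analytic combinatorics then yields the stated exponential growth.

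The main obstacle is twofold. The elimination producing the degree-$20$ polynomial is a heavy computation --- both the symbolic resultant route and the guess-and-certify route require careful bookkeeping of degrees and coefficient sizes, and establishing that the extracted factor is genuinely the \emph{minimal} polynomial rather than a multiple needs an irreducibility check. The more conceptual difficulty lies in the growth-rate step: the discriminant has many roots, and one must correctly match the combinatorial branch to a specific sheet of the algebraic curve and verify that $\rho$ is a genuine dominant branch-point singularity of that sheet, with no singularity of smaller modulus, before Pringsheim and the transfer theorem can be invoked.
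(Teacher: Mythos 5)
Your proposal is correct in substance and would reach both stated conclusions, but it takes a mechanically different route through the Bousquet-M\'elou--Jehanne framework than the paper does. Where you propose to compute the three small Puiseux roots $u_1,u_2,u_3$ of the kernel explicitly, impose $R(x,u_i)=0$, solve the resulting $3\times 3$ linear system by Cramer's rule, and then eliminate symmetric functions of the roots by resultants or Gr\"obner bases, the paper never touches the roots at all: working with $H$ directly (derivatives at $u=1$ rather than your shifted $G$ at $u=0$), it forms the single resultant $\mathrm{Res}_u(K,P)$ to eliminate $u$, then exploits the fact that the three small kernel roots force the surviving polynomial to have $H_1(x)$, and subsequently $H_2(x)$, as \emph{multiple} roots, so that two successive discriminants (with respect to $H_1$, then $H_2$) eliminate the remaining unknowns; the price is extraneous factors, and the correct one is selected by the initial condition $H_0(0)=1$, i.e.\ $T_3(1,0)=0$ while $T_2(1,0)\neq 0$. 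This difference matters practically: the elimination you describe is essentially the ``lazy approach'' the paper reports as infeasible (\emph{Singular} produced no output after a week), which is precisely why the resultant-plus-discriminants strategy was adopted. Your fallback route --- guessing the polynomial by Hermite--Pad\'e approximation from many coefficients and certifying it --- mirrors what the paper did only for \emph{discovery} (guessing from $250$ terms); to make it a proof you would need explicit degree bounds on the true annihilating polynomial extracted from Theorem~\ref{thmBMJAnalytic}, a step you flag but do not supply, whereas the paper's elimination needs no such bounds. For the growth rate, both arguments rest on the same fact that singularities of an algebraic function lie among the zeros of the leading coefficient and of the discriminant of its minimal polynomial; your Pringsheim-plus-branch-tracking treatment is the more careful one, while the paper shortcuts the identification of the dominant singularity by observing that the growth rate is at least $1$, so the relevant discriminant root is the unique positive one that is at most $1$ --- both paths land on the same reciprocal quartic and the value $\approx 39.88873$.
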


Given a suitable functional equation,
Bousquet-M\'elou and Jehanne~\cite{BMJ2006} present a way of setting up a system of polynomial equations that can then be solved to yield a polynomial having a root that is the desired generating function.
They suggest that the ``laziest approach'' is to feed this system of equations to a Gr\"obner basis package and let it work.
Unfortunately, as they comment, ``this lazy approach often fails, because the computation tends to take forever''.
This has also been our experience. Submission of the appropriate equations to \emph{Singular}, a computer algebra system optimized for working with polynomials, yielded no output after a week of processing.

We use a more practical strategy, derived from the results in~\cite{BMJ2006}
(see also Section~4 of~\cite{FS2001}). For the necessary algebraic manipulation, \emph{Maple}~\cite{Maple} %\note{Added citation for \emph{Maple}.}
 was used.

\begin{proof}[Proof of Theorem~\ref{thmMinPoly}]

To start with, since our functional equation is linear in $H(x,u)$,
it can be expressed in the form
\[
K(x,u) H(x,u) \;=\; P(H_0(x),H_1(x),H_2(x),x,u),
\]
where both $K$ and $P$ are polynomials that we omit for brevity and
$H_i(x) = \displaystyle\frac{\partial^i H}{\partial u^i} (x,1)$.

Thus,
we can use the kernel method to eliminate both $H(x,u)$ and $u$.
Observe that the kernel $K(x,u)=0$ if and only if $P(H_0(x),H_1(x),H_2(x),x,u)=0$.
We want to eliminate $u$ from this pair of equations.

To do so, we calculate the \emph{resultant} of $K$ and $P$.
The resultant of two polynomials is a monomial multiple of a polynomial in their coefficients which has the property that it is equal to zero if and only if the polynomials have a common root. The resultant is given by the determinant of a matrix (known as the Sylvester matrix) whose entries are coefficients of the polynomials.

Let $R$ be the resultant of $K$ and $P$ with respect to $u$. Then we have
\[
R(H_0(x),H_1(x),H_2(x),x) \;\equiv\; 32x^5 R_1(H_0(x),H_1(x),H_2(x),x) \;=\; 0,
\]
where $R_1$ is a large polynomial that cannot be factored.
%
%Given an appropriate functional equation of the form
%\[
%  P(F_0(x),F_1(x),\ldots,F_k(x),x) \;=\; 0,
%\]
%where $P$ is a
%polynomial, it can be shown  that any combinatorial solution for one of the $F_i(x)$ is a multiple root of $P$.
%

To eliminate $H_1(x)$ and $H_2(x)$, the \emph{discriminant} of $P$ can be used.
The discriminant of a polynomial is a polynomial function of its coefficients which has the property that it is equal to zero if and only if the original polynomial has a multiple root. For example, it is well known that the discriminant of the quadratic $ax^2+bx+c$ with respect to $x$ is $b^2-4ac$.\footnote{Resultants and discriminants are closely related, the discriminant of $P$ with respect to $x$ being, up to a monomial factor, the resultant of $P(x)$ and $P'(x)$.}

We now apply this approach twice. We do not give the polynomials involved explicitly, as they would cover many pages.
Firstly, taking the discriminant of $R_1$ with respect to $H_1(x)$ yields a new equation
\[
		S(H_0(x), H_2(x), x) \;\equiv\; S_1(x)^2 S_2(H_0(x), H_2(x), x)^2 \;=\; 0,
\]
where $S_1$ is a polynomial only in $x$, and $S_2$ cannot be factored further.

Then, taking the discriminant of $S_2(H_0(x), H_2(x), x)$ with respect to $H_2(x)$ yields
\[
	T(H_0(x), x) \;\equiv\; T_1(x)^2 T_2(H_0(x), x)^2 T_3(H_0(x),x)^6 \;=\; 0.
\]
where $T_1$, $T_2$ and $T_3$ are polynomials.
Thus both $T_2$ and $T_3$ are
possibilities for the minimal polynomial of $H(x,1)=H_0(x)$.

We rule out the first choice by observing that $T_2(1,0)$ is nonzero, whereas $T_3(1,0) = 0$
as is required from considering the constant term of the series expansion of $H_0(x)$.
$T_3$ is, in fact, the polynomial presented in the statement of the theorem, which is the same as the minimal polynomial that we were able to guess based on the first $250$ terms of the enumeration sequence.
	
The growth rate of the $321$-avoiding binary shrub forests is then determined from the minimal polynomial by taking its discriminant with respect to $H_0(x)$. The growth rate is given by the reciprocal of one of the positive real roots of the discriminant (see~\cite[Note VII.36, p.504]{FS2009}). As the growth rate must be at least $1$, the positive real root whose reciprocal gives the growth rate must be at most $1$. This leaves only one candidate, which is the greatest real root of the quartic polynomial in the statement of the theorem.
\end{proof}

\section{Summary}\label{sectSummary}

Throughout this paper we have studied forests of binary shrubs that avoid any permutation pattern of length 3.  This adds a new restriction to the classical pattern avoidance problem by requiring the digits $\pi_{3i+1}\pi_{3i+2}\pi_{3i+3}$ to form a 123 or a 132 pattern for all $i$.

Remarkably, forests avoiding a single pattern $\rho \in \{123, 132, 213, 231, 312\}$ are in bijection with lattice paths in a wedge.
%\addtext{
It would be interesting to explore whether this phenomenon is more widespread.
Are other similar pattern-avoiding structures equinumerous to such lattice paths? If so, is it possible to develop a more general theory to explain this?
%}

In contrast to the other patterns, the enumeration of forests avoiding 321 required us to use the machinery of analytic combinatorics. %\addtext{
However, perhaps, in this case too, there is a connection to lattice paths that remains to be uncovered.%}

One natural generalization also merits further investigation.  In Theorem~\ref{T:av132} and the discussion after Theorems~\ref{T:av213and312} and ~\ref{T:av231}, we generalized our results to the case of $k$-ary shrubs rather than binary shrubs.  More of our results could be generalized in this way or to forests of tree structures other than shrubs.

\section*{Acknowledgments}

We grateful to the University of Wisconsin - Eau Claire (UWEC) Department of Mathematics and Office of Research and Sponsored Programs for supporting work done by four of the coauthors at UWEC during the 2014-2015 academic year.  The authors also thank Alex Burstein for organizing the Special Session on Patterns in Permutations and Words at the spring 2015 Eastern Sectional Meeting of the American Mathematical Society at Georgetown University, which allowed collaboration on a then-open case in this manuscript.

\end{document}